\newcolumntype{?}{!{\vrule width 1pt}}
\theoremstyle{definition}
\newtheorem{theorem}{Theorem}
\newtheorem{proposition}[theorem]{Proposition}
\newtheorem{lemma}[theorem]{Lemma}
\newtheorem{corollary}[theorem]{Corollary}
\theoremstyle{definition}
\newtheorem{remark}[theorem]{Remark}
\newtheorem{example}[theorem]{Example}
\numberwithin{theorem}{section}
\newcommand{\RR}{\mathbb{R}}
\newcommand{\CC}{\mathbb{C} }
\newcommand{\GL}{\operatorname{GL}}
\newcommand{\SL}{\operatorname{SL}}
\newcommand{\SO}{\operatorname{SO}}
\newcommand{\lie}{\mathfrak{g}}
\DeclareMathOperator{\rank}{rank}
\DeclareMathOperator{\Sing}{Sing}
\DeclareMathOperator{\ann}{Ann}
\DeclareMathOperator{\diag}{\text{diag}}
\title{\textbf{Algebraic Analysis of Rotation Data}}
\author{Michael F. Adamer, Andr\'{a}s C. L{\H o}rincz, \\ Anna-Laura Sattelberger, and Bernd Sturmfels}
\date{}
\begin{document}
\maketitle

\begin{abstract}
\noindent 
We develop algebraic tools for statistical inference from
samples of rotation matrices. This rests on the theory of
$D$-modules in algebraic analysis.
Noncommutative Gr\"obner bases are used to
design numerical algorithms for maximum likelihood estimation,
 building  on the holonomic gradient method
of Sei, Shibata, Takemura, Ohara, and Takayama.
We study the Fisher model for sampling from rotation matrices,
and we apply our algorithms for data from the applied sciences.
On the theoretical side, we generalize
the underlying equivariant $D$-modules from ${\rm SO}(3)$
to arbitrary Lie groups. For compact groups, our $D$-ideals
encode the normalizing constant of the Fisher model.
\end{abstract}

\section{Introduction}

Many of the multivariate functions that arise in statistical inference are holonomic.
Being holonomic roughly means that the function is annihilated by a system of linear partial differential
operators with polynomial coefficients whose solution space is finite-dimensional.
Such a system of PDEs can be written as a left ideal in the Weyl algebra, or $D$-ideal, for short.
This representation allows for the application of algebraic geometry
and algebraic analysis, including the use of computational tools, 
such as Gr\"obner bases in the Weyl algebra
~\cite{satstu, Tak}.
 
 This important connection between statistics and algebraic analysis was
 first observed by a group of scholars in Japan, and it led to their
 development of the {\em Holonomic Gradient Method} (HGM)
 and the {\em Holonomic Gradient Descent} (HGD). We
 refer to~\cite{HNTT, koyama, hgmR}
 and to further references given therein.
 The point of departure for the present article is the work
 of Sei et al.~\cite{sei}, who developed HGD for data sampled from the
 rotation group ${\rm SO}(n)$,
 and the article of Koyama \cite{koyama} who undertook a study of
 the associated equivariant $D$-module.
 
  The statistical model we examine in this article
 is the Fisher distribution on the group of rotations,
  defined in (\ref{eq:fisherdensity}) and
 (\ref{eq:normalizing1}).  The aim of
 maximum likelihood estimation (MLE)
 is to learn the model parameters $\,\Theta\,$
 that best explain a given data set.
 In our case,   the MLE problem is difficult because
 there is no simple formula for evaluating
 the normalizing constant of the distribution.
 This is where algebraic analysis comes in.
 The normalizing constant is a holonomic function
 of the model parameters, and we can use
 its holonomic $D$-ideal to derive
 an efficient numerical scheme for solving the
 maximum likelihood estimation  problem.
 
 The present paper is organized as follows.
 Section~\ref{sec2} is purely expository.
 Here, we introduce the Fisher model, and we express
 its log-likelihood function in terms of the sufficient statistics of the given data. These are
 obtained  from the singular value decomposition of  the sample mean.
 In Section~\ref{sec:holonomic}, we turn to algebraic analysis.
 We review the holonomic $D$-ideal in \cite{sei} that annihilates the normalizing
 constant of the Fisher distribution,
 and we derive its associated Pfaffian system.
 Passing to $n\geq 3$, we next
   study  the $D$-ideals on ${\rm SO}(n)$ given in~\cite{koyama}.
 First new results can be found in
 Theorem \ref{thm:inv} and in Propositions \ref{prop:rank4} 
 and \ref{thm:inclusions}.
 
Section~\ref{HGDinaction} is concerned with
 numerical algorithms for maximum likelihood  estimation.
 We develop and compare
     Holonomic Gradient Ascent (HGA),
 Holonomic BFGS (H-BFGS) and
a Holonomic Newton method.
We implemented these methods in the language {\tt R}.
Section~\ref{sec5} highlights how samples of rotation matrices arise
in the sciences and engineering. Topics range from materials science 
and geology to astronomy and biomechanics. We apply holonomic methods 
to data from
the literature, and we discuss both successes and challenges.

The $D$-ideal of the normalizing constant  is of independent interest 
from the perspective of representation theory,
as it generalizes naturally to other Lie groups.
The development of that theory is our  main 
new mathematical contribution. This work is presented in Section~\ref{sec6}.

\section{The Fisher model for random rotations}\label{sec2}

 In this section, we introduce the Fisher model on the rotation group, building on~\cite{sei}.
The group $\,{\rm SO}(3)\,$ consists of all real $\,3 \times 3\,$ matrices $\,Y\,$
that satisfy $\,Y^{\text{t}} Y \,=\, {\rm Id}_3\,$ and ${\rm det}(Y) \,=\, 1$.
This is a smooth algebraic variety of dimension $3$ in the
$9$-dimensional space $\,\RR^{3 \times 3}$.
See~\cite{sonpaper} for a study of rotation groups from the perspective
of combinatorics and algebraic geometry.

The Haar measure on $\,{\rm SO}(3)\,$ is
the unique probability measure $\,\mu\,$ that is invariant under the group action.
The {\em Fisher model} is a family of probability distributions on
$\,{\rm SO}(3)\,$ that is parametrized by $\,3 \times 3\,$ matrices $\,\Theta $.
For a fixed $\Theta$, the density of the {\em Fisher distribution}~equals
\begin{equation}
\label{eq:fisherdensity}
 \qquad \qquad
f_\Theta(Y) \,\,= \,\,\frac{1}{c(\Theta)}\cdot \exp(\text{tr}\left( \Theta^{\text{t}}\cdot Y\right)) \qquad 
\hbox{for all} \,\,\,\, Y\,\in\, {\rm SO}(3).
\end{equation}
This is the density with respect to Haar measure $\mu$.
The denominator is the {\em normalizing constant}. It is chosen such that
$\,\int_{{\rm SO}(3)} f_\Theta(Y) \mu(dY) \,=\, 1$.
This requirement is equivalent to 
\begin{equation}
\label{eq:normalizing1}
c(\Theta) \,\,= \,\,\int_{{\rm SO}(3)}\!\! \exp(\textrm{tr}(\Theta^\text{t}\cdot Y))\mu(dY).
\end{equation}
This function is the Fourier--Laplace transform of the Haar measure~$\mu$; see
Remark~\ref{rmk:fourier}.
The Fisher model is an exponential family. It is one of the
simplest statistical models on $\SO(3)$. 
The task at hand is the accurate numerical evaluation of the integral 
(\ref{eq:normalizing1}) for given $\,\Theta\,$ in~$\,\RR^{3 \times 3}$.
We begin with the observation that, since  integration is
against the Haar measure, the function (\ref{eq:normalizing1}) is invariant under
multiplying $\,\Theta\,$ on the left or right by a rotation matrix:
$$ \qquad \qquad c ( Q\cdot \Theta\cdot R) \,\, = \,\, c(\Theta) \qquad \hbox{for all } \,\, Q,R \,\in\, {\rm SO}(3).$$
In order to evaluate (\ref{eq:normalizing1}), we can therefore restrict to the case of diagonal matrices.
Namely, given any $\,3 \times 3\,$ matrix $\,\Theta$, we first compute its 
{\em sign-preserving singular value decomposition}
$$ \Theta \,\, = \,\,  Q \cdot {\rm diag}(x_1,x_2,x_3) \cdot R . $$ 
Sign-preserving means that $Q,\,R\in\SO(3)$ and  $|x_1| \geq x_2 \geq x_3 \geq 0$.
For non-singular $\Theta$ this implies that $\,x_1 > 0$ whenever ${\rm det}(\Theta) > 0\,$
and $\,x_1 <  0\,$ otherwise.

The normalizing constant $\,c(\Theta)\,$ is the
following function of the three singular~values:
\begin{equation}
\label{eq:normalizing2}
\tilde c(x_1,x_2,x_3) \,\, \,\coloneqq\,  \,\, c({\rm diag}(x_1,x_2,x_3))\,\,=\,\,
\int_{{\rm SO}(3)}\!\!\! \exp(x_1 y_{11} + x_2 y_{22} + x_3 y_{33}) \mu(dY). 
\end{equation}

The statistical problem we address in this paper is parameter estimation for the Fisher model.
Suppose we are given a finite sample $\,\{Y_1, Y_2,\ldots,Y_N\}\,$
from the rotation group $\,\text{SO}(3)$.
We refer to Figure~\ref{fig:MedData} for a concrete example.
Our aim is to find the parameter
matrix $\,\Theta\,$ whose Fisher distribution $\, f_\Theta\,$ best explains the  data.
We work in the classical framework of likelihood inference, i.e.,~we seek
to compute the maximum likelihood estimate (MLE)
for the given data $\,\{Y_1, Y_2,\ldots,Y_N\}$.
 By definition, the MLE is the $\,3 \times 3\,$
parameter matrix $\,\hat \Theta\,$ which maximizes the log-likelihood function.
Thus, we must solve an optimization problem.

\begin{figure}[t]
	\centering
	\includegraphics[width=.43\textwidth]{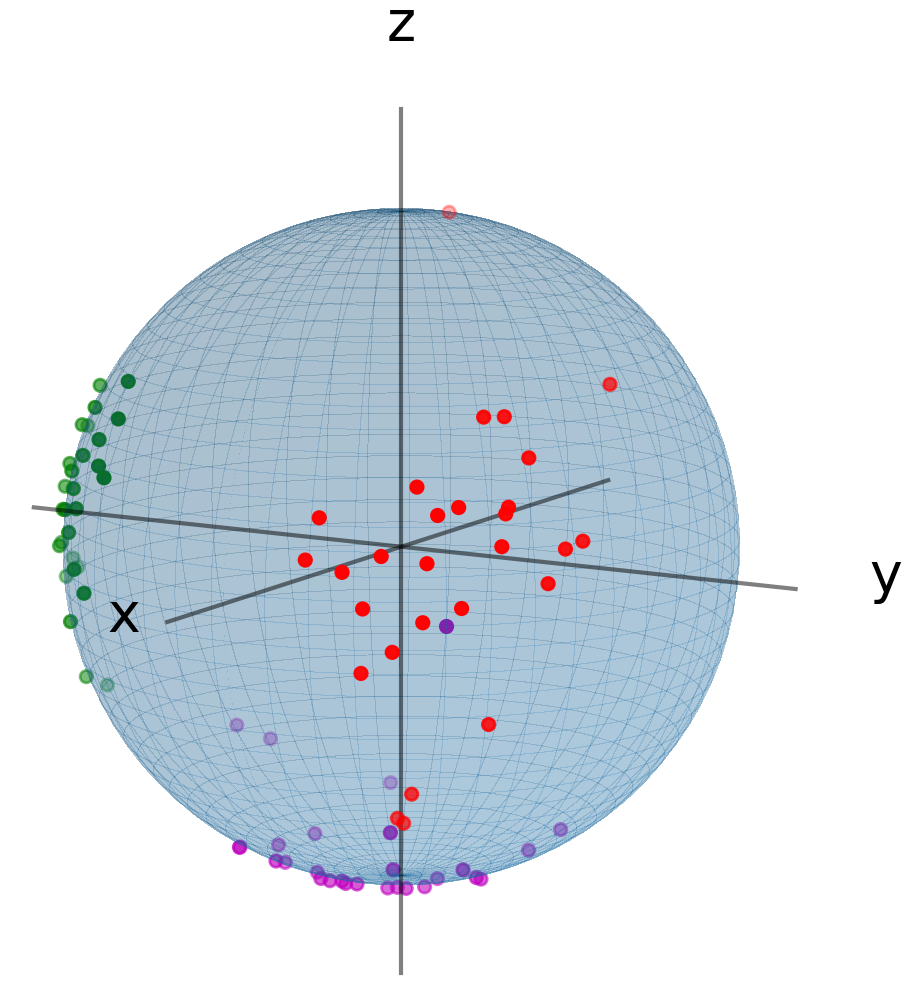}
	\caption{\footnotesize{A dataset of $28$ rotations from a study in
	vectorcardiography \cite{DLM}, a 
	method in medical imaging.
			Each point represents the rotation of the unit standard vector
			on the $x$-axis (depicted in red color),
			the $y$-axis (green), and the $z$-axis (purple). 
			This sample from the group ${\rm SO}(3)$ will be
			analyzed 	in Section \ref{medima}.}}
	\label{fig:MedData}
\end{figure}


From our data we obtain the  {\em sample mean}
$\,\bar{Y}\,=\,\frac{1}{N}\sum_{k=1}^N Y_k $. 
Of course, the sample mean~$\,\bar{Y}\,$ is 
generally not a rotation matrix anymore.
We next compute the sign-preserving singular value decomposition of the sample mean, 
i.e.,~we determine
$Q,R \in {\rm SO}(3)$ such that
$$ \bar{Y} \,\, = \,\, Q \cdot {\rm diag}(g_1,g_2,g_2) \cdot R . $$
The signed singular values $\,g_1,g_2,g_3\,$ together with $Q$ and $R$ are
sufficient statistics for the Fisher model. 
The sample $\,\{Y_1,\ldots,Y_N\}\,$ enters the log-likelihood function only via
 $\,g_1,g_2,g_3$.

\begin{lemma}{\cite[Lemma 2]{sei}} \label{MLEdiag} 
{\em The  log-likelihood function for the given sample from $\,{\rm SO}(3)\,$~is
\begin{equation}\label{MLEmax}
\ell\, \colon \,\RR^3 \longrightarrow \RR,\quad 
x\,\,\mapsto\,\, x_1 g_1 \,+\, x_2 g_2 \,+\, x_3 g_3 \,-\,\log(\tilde c(x_1,x_2,x_3)).
\end{equation}
If $\,(\hat{x}_1,\hat{x}_2,\hat{x}_3)\,$ is the maximizer of the function $\,\ell$, then the matrix
	\mbox{$\,\hat{\Theta}\,=\,Q \diag (\hat{x}_1,\hat{x}_2,\hat{x}_3)R\,$} is
	the MLE of the Fisher model (\ref{eq:fisherdensity})
	 of the sample $\,\{Y_1,\ldots,Y_N\} \,$  from the rotation group~$\,{\rm SO}(3)$. }
\end{lemma}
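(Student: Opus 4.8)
The plan is to derive the log-likelihood function directly from the definition of the Fisher density and then use the invariance of the normalizing constant under the $\SO(3)$ action on both sides. First I would write down the likelihood of the i.i.d.\ sample $\{Y_1,\dots,Y_N\}$ under the density (\ref{eq:fisherdensity}):
\[
L(\Theta) \,=\, \prod_{k=1}^N f_\Theta(Y_k) \,=\, \frac{1}{c(\Theta)^N}\,\exp\!\Bigl(\sum_{k=1}^N \operatorname{tr}(\Theta^{\mathrm t} Y_k)\Bigr).
\]
Taking logarithms and using linearity of the trace, $\sum_k \operatorname{tr}(\Theta^{\mathrm t} Y_k) = N\operatorname{tr}(\Theta^{\mathrm t}\bar Y)$, so that $\tfrac1N \log L(\Theta) = \operatorname{tr}(\Theta^{\mathrm t}\bar Y) - \log c(\Theta)$. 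Maximizing $L$ over $\Theta$ is the same as maximizing this per-sample log-likelihood, so the optimization problem is to maximize $\operatorname{tr}(\Theta^{\mathrm t}\bar Y) - \log c(\Theta)$ over $\Theta \in \RR^{3\times3}$.

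Next I would reduce to diagonal $\Theta$ using the two singular value decompositions. Write $\bar Y = Q\,\diag(g_1,g_2,g_3)\,R$ with $Q,R\in\SO(3)$, and parametrize the optimization by writing an arbitrary $\Theta$ as $\Theta = Q\,\Xi\,R$ for $\Xi \in \RR^{3\times 3}$ (this is a bijective reparametrization since $Q,R$ are invertible). Then $\operatorname{tr}(\Theta^{\mathrm t}\bar Y) = \operatorname{tr}(R^{\mathrm t}\Xi^{\mathrm t} Q^{\mathrm t}\, Q\,\diag(g)\,R) = \operatorname{tr}(\Xi^{\mathrm t}\diag(g))$, using $Q^{\mathrm t}Q = R R^{\mathrm t} = \mathrm{Id}_3$ and cyclicity of the trace. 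Moreover $c(\Theta) = c(Q\Xi R) = c(\Xi)$ by the left/right $\SO(3)$-invariance of the normalizing constant established in the excerpt. Hence the objective in the new coordinates is $\operatorname{tr}(\Xi^{\mathrm t}\diag(g)) - \log c(\Xi)$, and maximizing over $\Theta$ is equivalent to maximizing over $\Xi$.

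I would then argue that the maximum over $\Xi$ is attained at a diagonal matrix. The term $\log c(\Xi)$ depends on $\Xi$ only through its (sign-preserving) singular values, again by the invariance; the linear term $\operatorname{tr}(\Xi^{\mathrm t}\diag(g)) = \sum_i g_i\,\Xi_{ii}$ depends only on the diagonal entries of $\Xi$ and is not affected by the off-diagonal entries, while for fixed singular values of $\Xi$ one checks by von Neumann's trace inequality (equivalently, by the rearrangement structure of $\operatorname{tr}(\Xi^{\mathrm t}\diag(g))$) that the linear term is maximized precisely when $\Xi$ is diagonal with diagonal entries ordered compatibly with $g_1\ge g_2\ge g_3$. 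Since the sign-preserving SVD of $\bar Y$ already fixes $g_1,g_2,g_3$ in this order, this reduces the problem to maximizing $\sum_i x_i g_i - \log\tilde c(x_1,x_2,x_3)$ over $x = (x_1,x_2,x_3)\in\RR^3$, which is exactly the function $\ell$ in (\ref{MLEmax}); and if $\hat x$ is its maximizer then $\hat\Xi = \diag(\hat x)$ and $\hat\Theta = Q\diag(\hat x)R$ is the maximizer of $L$, i.e.\ the MLE.

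The main obstacle is the last step: justifying rigorously that restricting $\Xi$ to diagonal matrices does not lose the maximum. One must rule out that an off-diagonal $\Xi$ with a cleverly chosen spectrum beats every diagonal one; the cleanest way is to combine von Neumann's trace inequality, $\operatorname{tr}(\Xi^{\mathrm t}\diag(g)) \le \sum_i \sigma_i(\Xi)\,g_i$ (with the $g_i$ and $\sigma_i(\Xi)$ in decreasing order), with the fact that a diagonal $\Xi$ with the right signs and ordering achieves equality while leaving $\log\tilde c$ unchanged. A subtlety is the sign bookkeeping --- the sign-preserving SVD allows $x_1 < 0$ --- but since the linear term and $\log\tilde c$ are handled in the same reparametrization this does not cause trouble. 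Everything else is routine linear algebra and an appeal to the invariance already proved in the text, so the lemma follows; this is of course exactly the argument of \cite[Lemma 2]{sei}.
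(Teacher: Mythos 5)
The paper itself does not prove this statement; it is quoted verbatim from Sei et al.\ \cite[Lemma 2]{sei}, so there is no in-paper argument to compare with. Your derivation of the objective $\operatorname{tr}(\Theta^{\mathrm t}\bar Y)-\log c(\Theta)$, the reparametrization $\Theta=Q\,\Xi\,R$, and the use of the left/right $\SO(3)$-invariance of $c$ are all correct and are the standard route to this lemma.

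The one step that is not right as literally stated is the reduction to diagonal $\Xi$ via the plain von Neumann inequality. You claim that ``a diagonal $\Xi$ with the right signs and ordering achieves equality while leaving $\log\tilde c$ unchanged.'' Note that $\Xi$ ranges over an $\SO(3)\times\SO(3)$ orbit, not an ${\rm O}(3)\times{\rm O}(3)$ orbit: flipping the sign of a \emph{single} singular value is not available, and $\tilde c$ is invariant only under permutations and \emph{even} sign changes of its arguments (it genuinely depends on $\operatorname{sign}(x_1x_2x_3)$, which is why the sign-preserving SVD is used at all). Consequently, when $\operatorname{sign}(\det\Xi)\neq\operatorname{sign}(g_1)$ (and $\det\bar Y<0$ can certainly occur for a mean of rotations), the unconstrained bound $\operatorname{tr}(\Xi^{\mathrm t}\diag(g))\le\sum_i\sigma_i(\Xi)\,\sigma_i(\diag(g))$ is \emph{not} attained by any diagonal matrix in the orbit of $\Xi$, so your equality claim fails in that case. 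The fix is standard but should be made explicit: use the $\SO(3)$-constrained trace inequality, i.e.\ the fact that $\max_{U,V\in\SO(3)}\operatorname{tr}\bigl((U\diag(x)V)^{\mathrm t}\diag(g)\bigr)$ is attained at a diagonal representative $\diag(x')$ with $x'$ a permutation of $x$ composed with an even number of sign flips (this follows from a first-order/critical-point argument on the compact orbit, or from the formula $\max_{\Omega\in\SO(n)}\operatorname{tr}(A\Omega)=\sigma_1+\cdots+\sigma_{n-1}+\operatorname{sign}(\det A)\,\sigma_n$ as in \cite{KM}). Since $\tilde c(x')=\tilde c(x)=c(\Xi)$ for such $x'$, one gets $F(\Xi)\le\ell(x')\le\sup_{x\in\RR^3}\ell(x)$ for every $\Xi$, while $F(\diag(x))=\ell(x)$ gives the reverse inequality; together with strict concavity (uniqueness of the maximizer in $\Theta$) this yields the lemma. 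So the gap is real but local and repairable; the rest of your argument stands.
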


Lemma~\ref{MLEdiag} says that we need to maximize the function
\eqref{MLEmax} in order to compute the MLE in the Fisher model.
We note that a local maximum is already a global one since 
\eqref{MLEmax} is a strictly concave function.
The maximum is attained at a unique point in $\RR^3$.
We shall compute this point using tools from algebraic analysis
that are discussed in the next section.

\begin{remark} \label{rem:norm}
The singular values of  the sample mean $\,\bar{Y}\,$ are bounded from
above and below, namely $\,1 \geq |g_1| \geq g_2 \geq g_3 \geq 0$.
If $\, g_3\,$ is close to $1$, i.e., the average
of the rotation matrices is almost a rotation matrix, then
the data is typically concentrated about a preferred rotation.
In this case the normalizing constant becomes very large and
MLE on $\,\text{SO}(3)\,$ is numerically intractable;
see also Remark \ref{rem:orbitopes}.
 However, due
to the small spread of the data around a point in $\,\text{SO}(3)$,
a matrix valued Gaussian model on $\,\RR^3\,$ is an accurate
approximation.
\end{remark}

\section{Holonomic representation}\label{sec:holonomic}

We shall represent the normalizing constant 
$\,\tilde c\,$ by a system of linear
differential equations it satisfies. This is known as the holonomic representation
of this function.
We work in the {\em Weyl algebra} $\,D\,$ and in the
 {\em rational Weyl algebra} $\,R\,$ with complex coefficients:
$$ D\,\,=\,\,\mathbb{C}[x_1,x_2,x_3]\langle \partial_1,\partial_2,\partial_3 \rangle 
\qquad {\rm and} \qquad
 R \,\,= \,\, \mathbb{C}(x_1,x_2,x_3)\langle \partial_1,\partial_2,\partial_3\rangle .$$ 
We refer to~\cite{satstu, Tak} for  basics on these two 
noncommutative algebras of linear partial differential operators with 
polynomial and rational function coefficients, respectively. 
In order to stress the number of variables, we sometimes write $\,D_3\,$ instead of $\,D\,$
and $\,R_3\,$ instead of~$R$.
By a {\em $D$-ideal} 
we mean a left ideal in $D$, and by an {\em $R$-ideal} a left ideal in~$R$.
The use of these algebras in statistical inference was pioneered by
Takemura, Takayama, and their collaborators~\cite{HNTT, koyama, holorank, sei, hgmR}. 
We begin with an exposition of their results from~\cite{sei}.

The normalizing constant $\,\tilde c\,$ is closely related to the hypergeometric function 
$\, {_0F_1}\,$ of a matrix argument. In~\cite{sei}, annihilating differential operators of $\,\tilde c\,$ 
are derived from 
\begin{equation} \label{hypermuir}
H_i\,\,=\,\,\partial_i^2 - 1 \,+\, \sum_{j\neq i} \frac{1}{x_i^2-x_j^2}(x_i\partial_i-x_j\partial_j)  
\qquad  {\rm for } \,\,i\,=\,1,2,3.
\end{equation}
These in turn can be
obtained from Muirhead's differential operators in \cite[Theorem 7.5.6]{muir} 
by a change of variables. In the notation of~\cite{satstu}, we have 
$\,H_i \bullet \tilde{c} \,=\,0\,$ for $\,i\,=\,1,2,3$.
Written in the more familiar form of linear PDEs, this~says 
$$
\frac{\partial^2 \tilde c}{\partial x_i ^2} \,+\, \sum_{j\neq i} \frac{1}{x_i^2-x_j^2}
\bigl(x_i\frac{\partial \tilde c}{\partial x_i } 
\,-\, x_j\frac{\partial \tilde c}{\partial x_j} \bigr) \,\,=\,\, \tilde c 
\qquad  {\rm for } \,\,i\,=\,1,2,3. 
$$
Note that the operators $\,H_i\,$ are elements in the rational Weyl algebra~$R$. 
Clearing the denominators, we obtain elements $\,G_i\,$ in the Weyl algebra $\,D\,$ 
that annihilate $\tilde c$,
namely
\begin{equation}\label{hyperpol}
	G_i \,\,=\,\, \prod_{j\neq i} (x_i^2-x_j^2) \cdot H_i.
\end{equation}
By~\cite[Theorem 1]{sei}, 
the following three additional differential operators in $\,D\,$
annihilate $\tilde c$:
\begin{align}\label{so3diff}
 L_{ij} \,\,\coloneqq\,\,(x_i^2-x_j^2)\partial_i\partial_j \,-\, (x_i\partial_i-x_j\partial_j)\,-\,(x_i^2-x_j^2)\partial_{k}.
\end{align}
Here the indices are chosen to satisfy $\,1\leq i < j \leq 3\,$ and $\,\{i,j,k\}\,=\,\{1,2,3\}$. 

Let us consider the $D$-ideal  that is generated by the six operators 
in~\eqref{hyperpol} and~\eqref{so3diff}:
\begin{align}\label{defI}
I \,\,\coloneqq \,\,\langle G_1,G_2,G_3,L_{12},L_{13},L_{23} \rangle.
\end{align} 
In the rational Weyl algebra, we have $\, RI\,=\,\langle H_1,H_2,H_3,L_{12},L_{13},L_{23}\rangle\,$
as $R$-ideals.
We enter the $D$-ideal $\,I\,$ into the
computer algebra system {\tt Singular:Plural} as follows:
\begin{verbatim}
ring r = 0,(x1,x2,x3,d1,d2,d3),dp;
def D = Weyl(r); setring D;
poly L12 = (x1^2-x2^2)*d1*d2 - (x2*d1-x1*d2)-(x1^2-x2^2)*d3;
poly L13 = (x1^2-x3^2)*d1*d3 - (x3*d1-x1*d3)-(x1^2-x3^2)*d2;
poly L23 = (x2^2-x3^2)*d2*d3 - (x3*d2-x2*d3)-(x2^2-x3^2)*d1;
poly G1 = (x1^2-x2^2)*(x1^2-x3^2)*d1^2 + (x1^2-x3^2)*(x1*d1-x2*d2) 
             + (x1^2-x2^2)*(x1*d1-x3*d3) - (x1^2-x2^2)*(x1^2-x3^2);
poly G2 = (x2^2-x1^2)*(x2^2-x3^2)*d2^2 + (x2^2-x3^2)*(x2*d2-x1*d1) 
             + (x2^2-x1^2)*(x2*d2-x3*d3) - (x2^2-x1^2)*(x2^2-x3^2);
poly G3 = (x3^2-x1^2)*(x3^2-x2^2)*d3^2 + (x3^2-x2^2)*(x3*d3-x1*d1) 
             + (x3^2-x1^2)*(x3*d3-x2*d2) - (x3^2-x1^2)*(x3^2-x2^2);
ideal I = L12,L13,L23,G1,G2,G3;
\end{verbatim}
We can now perform various symbolic computations in the Weyl algebra $D$.
We used the libraries {\tt dmodloc}~\cite{dmodloc}  and {\tt dmod}~\cite{LMM}, 
due to Andres, Levandovskyy, and Mart\'in-Morales.
In particular, the following two lines confirm that $\,I\,$ is holonomic
and its holonomic rank is~$4$:
\begin{verbatim}
isHolonomic(I);
holonomicRank(I);
\end{verbatim}

The rank statement means algebraically that
 $\,\dim_{\CC(x_1,x_2,x_3)} \left( R/RI \right) \,=\,4$. In terms of analysis,
 it means that the set of holomorphic solutions to $\,I\,$
 on a small open ball $\,\mathcal{U} \,\subset\, \CC^3\,$ is 
 a $4$-dimensional vector space.
 Here $\,\mathcal{U}\,$ is chosen to be disjoint from the singular locus
 \begin{equation}
 \label{eq:singularlocus}
  {\rm Sing}(I) \,\, = \,\,
 \bigl\{ \,x \in \CC^3 \,:\,
 (x_1^2-x_2^2)(x_1^2-x_3^2)(x_2^2-x_3^2) \,=\, 0 \,\bigr\}. 
 \end{equation}
We note that the normalizing constant $\,\tilde c =
{\tilde c}(x_1,x_2,x_3)\,$ is a real analytic function
on  $\RR^3 \backslash {\rm Sing}(I)$ that extends to a holomorphic function on
all of complex affine space $\CC^3$.
 
Using Gr\"{o}bner bases in the rational Weyl algebra~$R$, we find
that the initial ideal of $\,RI\,$ for the degree reverse lexicographic order
is generated by the symbols of our six operators:
$$ {\rm in}(RI) \,\, = \,\, \langle \,
\partial_1 \partial_2\,,\,
\partial_1 \partial_3\,,\,
\partial_2 \partial_3 \,,\,\,
\partial_1^2\,,\,
\partial_2^2\,,\,
\partial_3^2\,
\rangle. $$
The set of standard monomials equals
$\,S\,=\,\{1,\partial_1,\partial_2,\partial_3\}$. This is a 
$\,\CC(x_1,x_2,x_3)$-basis 
for the vector space $\,R/RI$.
In this situation, we can associate a {\em Pfaffian system} to 
 the $D$-ideal~$\,I$. For the general theory, 
we refer the reader to~\cite{Tak}
and specifically to~\cite[Equation~(23)]{satstu}.

The Pfaffian system is a system of first-order linear
differential equations associated to the holonomic function $\,\tilde c$.
It consists of three $\,4 \times 4\,$ matrices $\,P_1,P_2,P_3\,$
whose entries are rational functions in $\,x_1,x_2,x_3$.
We introduce the column vector 
$\,C\,=\,(\,\tilde c,\,\partial_1\bullet \tilde c,\,\partial_2\bullet \tilde c,\,\partial_3\bullet \tilde c\,)^{\text{t}}$.

\begin{theorem}{\cite[Theorem 2]{sei}}
\label{thm:sei2} \ {\em	The Pfaffian system associated to
the normalizing constant~$\,\tilde c\,$ of the Fisher distribution 
	(\ref{eq:fisherdensity}) consists of the following three vector equations:
		\begin{align}\label{PfaffianSO3}
	\partial_i\bullet C \,\,=\,\,P_i \cdot C \qquad \,\, {\rm for } \,\,i\,=\,1,2,3,
	\end{align}
	where the matrices
	$\,P_1,P_2,P_3\,\in\, \mathbb{C}(x_1,x_2,x_3)^{4\times 4}\,$ are
	 $$
	P_1\,\,=\,\, \begin{pmatrix}
	0 & 1 & 0 & 0\\
	1 & \frac{x_1(-2x_1^2+x_2^2+x_3^2)}{(x_1^2-x_3^2)(x_1^2-x_2^2)} &\frac{x_2}{x_1^2-x_2^2} & \frac{x_3}{x_1^2-x_3^2}\\
	0 & \frac{x_2}{x_1^2-x_2^2} & \frac{-x_1}{x_1^2-x_2^2} & 1\\
	0 & \frac{x_3}{x_1^2-x_3^2}&  1& \frac{-x_1}{x_1^2-x_3^2}
	\end{pmatrix}, \quad
	P_2\,\,=\,\,\begin{pmatrix}
	0 & 0 & 1 & 0\\
	0 & \frac{-x_2}{x_2^2-x_1^2} & \frac{x_1}{x_2^2-x_1^2} & 1\\
	1 & \frac{x_1}{x_2^2-x_1^2} & \frac{x_2(x_1^2-2x_2^2+x_3^2)}{(x_2^2-x_1^2)(x_2^2-x_3^2)} & \frac{x_3}{x_2^2-x_3^2}\\
	0 & 1 & \frac{x_3}{x_2^2-x_3^2} & \frac{-x_2}{x_2^2-x_3^2}
	\end{pmatrix},$$ 
	$$ {\rm and} \quad
	P_3\,\,=\,\, \begin{pmatrix}
	0 & 0 & 0 & 1\\
	0 &  \frac{-x_3}{x_3^2-x_1^2} & 1 &  \frac{x_1}{x_3^2-x_1^2}\\
	0 & 1 &  \frac{-x_3}{x_3^2-x_2^2} &  \frac{x_2}{x_3^2-x_2^2}\\
	1 & \frac{x_1}{x_3^2-x_1^2} &  \frac{x_2}{x_3^2-x_2^2} & \frac{x_3(x_1^2+x_2^2-2x_3^2)}{(x_3^2-x_1^2)(x_3^2-x_2^2)}
	\end{pmatrix}.
	$$
}
\end{theorem}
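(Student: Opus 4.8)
The plan is to obtain the Pfaffian system by the standard procedure that turns a holonomic $D$-ideal into a first-order system in its basis of standard monomials; see \cite[Equation~(23)]{satstu} and \cite{Tak}. Everything that makes this procedure applicable is already in place: $I$ is holonomic, $\operatorname{in}(RI)=\langle\partial_1\partial_2,\partial_1\partial_3,\partial_2\partial_3,\partial_1^2,\partial_2^2,\partial_3^2\rangle$ for the degree reverse lexicographic order, and $S=\{1,\partial_1,\partial_2,\partial_3\}$ is a $\CC(x_1,x_2,x_3)$-basis of $R/RI$. Equivalently, $H_1,H_2,H_3$ from \eqref{hypermuir} together with the monic operators $(x_i^2-x_j^2)^{-1}L_{ij}$ for $1\le i<j\le 3$ form a Gr\"obner basis of $RI$, with leading monomials $\partial_i^2$ and $\partial_i\partial_j$. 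This last fact---furnished by the computer-algebra computation reported above---is the only non-routine ingredient; what follows is bookkeeping.

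Concretely, for each $i$ and each $s\in S$ I would compute the normal form $\partial_i\cdot s\equiv\sum_{s'\in S}(P_i)_{s,s'}\,s'\pmod{RI}$; by the Gr\"obner basis property each reduction is a single division step, and its coefficients define the row of $P_i$ indexed by $s$. Since $\partial_i\cdot s-\sum_{s'}(P_i)_{s,s'}s'\in RI$ and $\tilde c$ is annihilated by $I$ (by \eqref{hypermuir} and \eqref{so3diff}), applying this operator to $\tilde c$ yields $\partial_i\bullet(s\bullet\tilde c)=\sum_{s'}(P_i)_{s,s'}(s'\bullet\tilde c)$, which is the $s$-component of \eqref{PfaffianSO3} once the entries of $C$ are identified with $\{\,s\bullet\tilde c:s\in S\,\}$. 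There are three cases. For $s=1$ we have $\partial_i\cdot 1=\partial_i\in S$ already reduced, so the first row of $P_i$ is the standard basis vector $e_{i+1}^{\text{t}}$. For $s=\partial_i$ we solve $H_i\bullet\tilde c=0$ for $\partial_i^2\bullet\tilde c$; putting the two fractions $\tfrac1{x_i^2-x_j^2}$ and $\tfrac1{x_i^2-x_k^2}$ (here $\{i,j,k\}=\{1,2,3\}$) over a common denominator produces the diagonal entry $\tfrac{x_i(-2x_i^2+x_j^2+x_k^2)}{(x_i^2-x_j^2)(x_i^2-x_k^2)}$, the off-diagonal entries $\tfrac{x_j}{x_i^2-x_j^2}$ and $\tfrac{x_k}{x_i^2-x_k^2}$, and the entry $1$ multiplying $\tilde c$. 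For $s=\partial_j$ with $j\ne i$ we use $\partial_i\partial_j=\partial_j\partial_i$ and solve the relevant one of $L_{12},L_{13},L_{23}$ for $\partial_i\partial_j\bullet\tilde c$, obtaining $\partial_k\bullet\tilde c$ plus a $\CC(x_1,x_2,x_3)$-combination of $\partial_i\bullet\tilde c$ and $\partial_j\bullet\tilde c$ with denominator $x_i^2-x_j^2$. Assembling the four rows for each $i$ produces the matrices $P_1,P_2,P_3$ in the statement.

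As a final sanity check I would verify the integrability conditions $\partial_iP_j-\partial_jP_i=P_iP_j-P_jP_i$ for $i<j$. These must hold, since $P_1,P_2,P_3$ are the representation matrices of the pairwise commuting operators $\partial_1,\partial_2,\partial_3$ acting on the vector space $R/RI$ in the basis $S$; but the explicit check is a useful guard against sign or denominator slips in transcription. I do not anticipate a genuine obstacle here: the mathematical content is the already-established Gr\"obner basis statement for $RI$, and the remainder is a finite, elementary normal-form calculation.
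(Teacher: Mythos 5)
Your proposal is correct and follows essentially the same route as the paper: the theorem is quoted from Sei et al., and the paper reproduces it exactly by the normal-form/Gr\"obner reduction of $\partial_i\cdot s$ for $s\in S=\{1,\partial_1,\partial_2,\partial_3\}$ modulo $RI$ (done there by computer with {\tt HolonomicFunctions} in the rational Weyl algebra, rather than by hand as you outline), with the computed initial ideal as the only non-routine input. One caution: to land on the stated matrices, your reduction of $\partial_i\partial_j$ must use the operators as implemented in the {\tt Plural} code, whose first-order part is $x_j\partial_i-x_i\partial_j$ (this is the version that actually annihilates $\tilde c$ and is consistent with the rows of $P_1,P_2,P_3$), not the first-order part $x_i\partial_i-x_j\partial_j$ as printed in \eqref{so3diff}, which would swap the numerators in the off-diagonal rows.
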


We reproduced this Pfaffian system from the operators
$\,G_1,G_2,G_3,L_{12},L_{13},L_{23}\,$ with the {\tt Mathematica} 
package {\tt HolonomicFunctions}~\cite{CK}. This was done
by running Gr\"{o}bner basis computations in the rational 
Weyl algebra $\,R\,$ with the degree reverse lexicographic order.
See~\cite[Example 3.4]{satstu} for an illustration on how this is done.

The Pfaffian system~\eqref{PfaffianSO3} allows us to recover 
the $i$th partial derivative of the normalizing constant as the first coordinate of 
the column vector
$P_i\cdot C$. In symbols we have $\,\partial_i \bullet \tilde{c}\,=\,(P_i\cdot C)_1$.
We make extensive use of this fact when 
computing the MLE in Section~\ref{HGDinaction}.
In the same vein, we can recover the Hessian of $\,\tilde{c}\,$
from the Pfaffian system of $\,\tilde{c}\,$ as follows:
\begin{equation}\label{Hessian}
\begin{matrix}
&  \partial_1^2\bullet \tilde{c} \,\, &=\,\, (P_1\cdot C)_2, \quad \partial_1\partial_2 \bullet \tilde{c} \,\,
&=\,\, (P_2\cdot C)_2, \quad \partial_1\partial_3 \bullet \tilde{c} \,\,&=\,\, (P_3\cdot C)_2 ,\\
&\,\partial_2^2\bullet \tilde{c}\,\,&=\,\,(P_2\cdot C)_3, \quad \partial_2\partial_3\bullet \tilde{c} \,\,
&=\,\, (P_3\cdot C)_3, \quad \ \ \, \partial_3^2 \bullet \tilde{c} \,\,&=\,\, (P_3 \cdot C)_4.
\end{matrix}
\end{equation}
This allows for the use of second order optimization algorithms, see Section~\ref{HGDinaction}.

An object of interest---from the algebraic analysis perspective---is
the Weyl closure of the $D$-ideal $\,I$. By definition, the
{\em Weyl closure} is the following $D$-ideal which clearly contains~$I$:
$$W(I)\,\coloneqq \,RI\,\cap\, D.$$
In general, it is a challenging problem to compute the Weyl closure of a $D$-ideal.
This computation is reminiscent of finding the radical of a polynomial ideal, which,
according to Hilbert's Nullstellensatz, consists of all polynomials that vanish on the
complex solutions to the given polynomials. 
The Weyl closure plays a similar role for holonomic functions.
It turns out that computing $\,W(I)\,$ is fairly benign for the $D$-ideal $\,I\,$ studied in this section.

\begin{lemma} \label{lem:W(I)}
Let $\,I\,$ be the holonomic $D$-ideal in~\eqref{defI}. Then  
the Weyl closure $\,W(I)\,$ is generated by $\,I\,$ and the one additional operator 
$\, x_1\partial_1\partial_3+x_2\partial_2\partial_3+x_3\partial_3^2-x_2\partial_1-x_1\partial_2-x_3+2\partial_3 $.
\end{lemma}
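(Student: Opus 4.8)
The plan is to verify the two inclusions $J \subseteq W(I)$ and $W(I) \subseteq J$ separately, where $J \coloneqq I + DP$ is the $D$-ideal generated by $I$ together with the operator $P \coloneqq x_1\partial_1\partial_3+x_2\partial_2\partial_3+x_3\partial_3^2-x_2\partial_1-x_1\partial_2-x_3+2\partial_3$. Throughout, set $f \coloneqq (x_1^2-x_2^2)(x_1^2-x_3^2)(x_2^2-x_3^2)$, which by~\eqref{eq:singularlocus} vanishes precisely on $\Sing(I)$, and recall that $I$ is holonomic of rank $4$ with the Pfaffian system of Theorem~\ref{thm:sei2}.

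For the inclusion $J \subseteq W(I)$, it suffices to show $P \in RI$, since $W(I) = RI \cap D$ already contains $I$. I would do this by reducing $P$ to zero against a Gröbner basis of $RI$ in the rational Weyl algebra $R$ — for instance the Gröbner basis already computed in order to extract the matrices $P_1,P_2,P_3$ of Theorem~\ref{thm:sei2}. Equivalently, one clears denominators: there is an explicit identity $f^{a} P = \sum_{i=1}^{3} u_i G_i + \sum_{i<j} v_{ij} L_{ij}$ in $D$, with $u_i, v_{ij} \in D$ and $a$ a small positive integer, obtained by reducing $f^{a}P$ modulo $I$ in $D$. Either computation gives $P \in D[f^{-1}]I \subseteq RI$, hence $P \in W(I)$ and $J \subseteq W(I)$.

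For the reverse inclusion $W(I) \subseteq J$, I would invoke the standard description of the Weyl closure of a holonomic ideal whose singular locus lies in a single hypersurface: since $f$ vanishes on $\Sing(I)$,
$$ W(I) \,=\, D[f^{-1}]I \,\cap\, D \,=\, \bigl\{\, Q \in D \,:\, f^{k} Q \in I \text{ for some } k \ge 0 \,\bigr\}; $$
see~\cite{satstu, Tak}. Indeed, every denominator occurring in the Pfaffian matrices of Theorem~\ref{thm:sei2} divides $f$, so $(D/I)[f^{-1}]$ is free of rank $4$ over $\CC[x_1,x_2,x_3][f^{-1}]$ on the classes of $1,\partial_1,\partial_2,\partial_3$; being free it embeds into its localization $R/RI$, and the displayed identity for $W(I)$ follows. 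Because $I \subseteq J$, the right-hand side is contained in $\bigl\{\, Q \in D : f^{k} Q \in J \text{ for some } k \,\bigr\}$, so it is enough to prove that $f$ is a non-zero-divisor on the left $D$-module $D/J$ — equivalently, that $D/J$ has no $f$-torsion, equivalently that $J$ coincides with its saturation $D[f^{-1}]J \cap D$. I would confirm this with a $D$-module computation: compute the localization $D[f^{-1}] \otimes_D (D/J)$ using the {\tt dmodloc} library and check that the canonical map $D/J \to D[f^{-1}] \otimes_D (D/J)$ is injective, being careful, as always in the noncommutative setting, with the sidedness of the colon and saturation operations. Together with the first part, this yields $W(I) = J$.

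The conceptual core, and the step I expect to be the main obstacle, is the reduction in the last paragraph: for this particular holonomic $I$ the Weyl closure is recovered by saturating at the single polynomial $f$ rather than at all nonzero polynomials, and establishing this cleanly rests on $f$ vanishing on $\Sing(I)$ together with the explicit rank-$4$ Pfaffian of Theorem~\ref{thm:sei2}. Granting that reduction, the lemma is equivalent to the single assertion that $D/J$ is $f$-torsion-free, which is exactly what the final computation verifies. As an independent check, one can run a Weyl-closure algorithm directly on $I$ and confirm that the result is generated by $I$ and $P$.
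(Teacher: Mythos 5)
Your proposal is correct in substance, but it takes a more explicit route than the paper. The paper's proof is a black-box computation: it runs the Weyl closure procedure of the {\tt Singular} library {\tt dmodloc} on $I$, observes $I \subsetneq W(I)$, and checks by Gr\"obner reduction in the Weyl algebra that adjoining the extra operator $P$ yields a Weyl-closed ideal. What you propose is essentially an unrolled version of what that procedure does internally: you certify $P \in RI$ (hence $I + DP \subseteq W(I)$), you invoke the description $W(I) = D[f^{-1}]I \cap D$ for a polynomial $f$ vanishing on $\Sing(I)$, and you reduce the reverse inclusion to the statement that $D/J$ has no $f$-torsion, verified again by machine. This buys a more transparent certificate --- an explicit identity $f^{a}P = \sum_i u_i G_i + \sum_{i<j} v_{ij}L_{ij}$ together with an $f$-saturation check --- in place of trusting the closure command, at the cost of more theory to justify; both proofs ultimately rest on computer algebra.

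One step of your justification needs repair. The argument that $(D/I)[f^{-1}]$ is free on the classes of $1,\partial_1,\partial_2,\partial_3$ because the Pfaffian matrices of Theorem~\ref{thm:sei2} have denominators dividing $f$ is circular as stated: the Pfaffian relations are only known to lie in $RI$, and concluding that they lie in $D[f^{-1}]I$ --- which is what the spanning claim requires --- is essentially the saturation statement you are trying to establish. The correct justification is Tsai's theorem on the Weyl closure of ideals of finite holonomic rank: the kernel of $D/I \to R/RI$ is the $\CC[x_1,x_2,x_3]$-torsion submodule, which vanishes on the complement of $\Sing(I)$ because there $D/I$ is an integrable connection (hence torsion-free), so the torsion is supported on $V(f)$ and is killed elementwise by powers of $f$; this gives $W(I) = \{Q \in D : f^kQ \in I \text{ for some } k\}$ directly. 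That is precisely the saturation-based algorithm implemented in {\tt dmodloc}, so with this citation in place your two computations, once actually carried out, yield a complete proof equivalent to the paper's.
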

\begin{proof}
We used the {\tt Singular} library {\tt dmodloc}~\cite{dmodloc} to compute the Weyl closure of $\,I$. 
We found that $\,I\,$ is not Weyl-closed, i.e., $\,I\,\subsetneq\, W(I)$. 
Moreover, by Gr\"{o}bner basis reductions in the Weyl algebra, we find that 
adding the claimed operator results in a Weyl-closed ideal.
\end{proof}

Following~\cite{koyama, sei}, 
we now consider the Fisher distribution on~$\text{SO}(n)$.
The normalizing constant $\,c(\Theta)\,$ is defined as in~\eqref{eq:normalizing1},
with the integral taken over $\,\text{SO}(n)\,$ with its Haar measure.
Let $\,D_{n^2}\,$ be the Weyl algebra whose variables
are the entries of the $\,n \times n\,$ matrix $\,\Theta \,=\, (t_{ij})$. 
The corresponding $n \times n$ matrix of differential operators in 
$D_{n^2}$ is denoted by $\,\partial \,=\, (\partial_{ij})$.
The following result was established by Koyama~\cite{koyama},
based on earlier work of Sei et al.~\cite{sei}.
We shall prove a more general statement for arbitrary compact Lie groups in Section~\ref{sec6}.

\begin{theorem}\label{ann-theta}
	The annihilator of $\,c(\Theta)\,$ is the $D$-ideal generated by the following operators:
$$
\begin{matrix}	
	\quad d\,\,=\,\,1\,-\,\det(\partial),\quad 
	g_{ij}\,\,=\,\,\delta_{ij}\,-\,\sum_{k=1}^n \partial_{ik}\partial_{jk}\quad  &{\rm for } \,\, 1\le i\le j\le n, \medskip \\
P_{ij}\,\,=\,\,\sum_{k=1}^n \bigl(t_{ik}\partial_{jk}\,-\,t_{jk}\partial_{ik}\bigr) \quad  &{\rm for } \,\, 1\le i<j\le n.
\end{matrix}
$$	
\end{theorem}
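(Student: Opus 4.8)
The plan is to establish two inclusions. For the easy direction, I would first verify that each of the listed operators $d$, $g_{ij}$, and $P_{ij}$ annihilates $c(\Theta)$. The operator $P_{ij}$ encodes the left-invariance of the Haar measure under rotation in the $(i,j)$-plane: differentiating $c(Q_{\epsilon}\Theta) = c(\Theta)$ with respect to $\epsilon$ at the one-parameter subgroup $Q_\epsilon$ generated by the elementary skew-symmetric matrix $E_{ij}-E_{ji}$ yields $P_{ij}\bullet c = 0$. For $g_{ij}$ and $d$, I would differentiate under the integral sign in $c(\Theta)=\int_{{\rm SO}(n)}\exp(\mathrm{tr}(\Theta^{\mathrm t}Y))\,\mu(dY)$: since $\partial_{ij}\bullet\exp(\mathrm{tr}(\Theta^{\mathrm t}Y)) = y_{ij}\exp(\cdots)$, the operator $\sum_k\partial_{ik}\partial_{jk}$ pulls down $\sum_k y_{ik}y_{jk} = (YY^{\mathrm t})_{ij} = \delta_{ij}$, giving $g_{ij}\bullet c = 0$, and $\det(\partial)$ pulls down $\det(Y)=1$, giving $d\bullet c = 0$. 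So the ideal $J$ generated by these operators is contained in $\ann(c(\Theta))$.

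The hard direction is the reverse inclusion $\ann(c(\Theta))\subseteq J$, equivalently that $J$ is already the full annihilator. The approach I would take is to show that $J$ is holonomic of holonomic rank equal to the known dimension of the solution space, and that $D/J$ is then forced to coincide with $D/\ann(c(\Theta))$ because $c(\Theta)$ generates a holonomic $D$-module whose rank matches. Concretely: the function $c$ on ${\rm SO}(n)$, being a matrix Bessel/hypergeometric-type object, has a $D$-module $D/\ann(c)$ whose characteristic variety and rank are understood from the representation theory of ${\rm SO}(n)$ acting on $\mathbb C^{n\times n}$; one expects $D/J$ to be a simple holonomic module (or at least to have no proper holonomic quotient of the same rank), so $J\subseteq\ann(c)\subseteq D$ with $D/J$ and $D/\ann(c)$ having equal rank forces equality. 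I would compute $\mathrm{in}(RJ)$ via a Gröbner basis in the rational Weyl algebra to read off that $RJ$ is zero-dimensional and to identify the standard monomials, thereby pinning down the rank; the symbols of $g_{ij}$ generate the quadratic relations $\sum_k\xi_{ik}\xi_{jk}-\delta_{ij}\cdot 0$, so the characteristic variety is cut out (set-theoretically) by the equations $\partial\partial^{\mathrm t}=0$ together with $\det(\partial)=0$, describing the conormal geometry of the determinantal stratification.

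The main obstacle is proving that no operators are missing, i.e.\ that $J$ is radical/Weyl-closed and equals the honest annihilator rather than a proper subideal. This is exactly the phenomenon flagged earlier in Lemma~\ref{lem:W(I)}, where the $n=3$ ideal $I$ of Section~\ref{sec:holonomic} was \emph{not} Weyl-closed and required one extra operator; so a priori one might fear $J$ suffers the same defect. The resolution I would pursue is representation-theoretic: the operators $g_{ij}$, $P_{ij}$, $d$ are manifestly equivariant under the ${\rm SO}(n)\times{\rm SO}(n)$ action, and I would identify $D/J$ with an explicit equivariant $D$-module — the image of a natural morphism reflecting that $c$ is (up to the equivariant structure) the $\delta$-function of the group orbit / the Fourier transform of the structure sheaf of ${\rm SO}(n)\subset\mathbb C^{n\times n}$. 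Showing that this equivariant module is simple, or directly that $J$ is its own Weyl closure (e.g.\ by a localization computation analogous to the $n=3$ check, or by exhibiting that $D/J$ embeds in the $\mathcal O$-coherent sheaf of functions on the smooth locus), closes the argument. For the published proof I would defer to the more general compact-group statement of Section~\ref{sec6}, of which this is the ${\rm SO}(n)$ instance, and cite Koyama~\cite{koyama} for the original argument.
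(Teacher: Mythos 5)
Your opening paragraph (checking $J\subseteq\ann_{D_{n^2}}(c)$ by differentiating under the integral, pulling down $YY^{\text{t}}=\mathrm{Id}$, $\det Y=1$, and using infinitesimal left invariance for the $P_{ij}$) is correct, and the strategy you finally settle on for the reverse inclusion — identify $D_{n^2}/J$ with an equivariant \emph{simple} holonomic module, namely the Fourier transform of the $D$-module pushforward of the structure sheaf of the complexified group along $\SO(n,\CC)\hookrightarrow\CC^{n\times n}$, and then use maximality of $J$ to force $J=\ann(c)$ — is exactly the paper's route (Theorem~\ref{thm:push}, Corollary~\ref{cor:anndist}, Corollary~\ref{cor:annfun}). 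The genuine gap is that this identification and its simplicity \emph{are} the content of the theorem, and you only assert them (``showing that this equivariant module is simple \dots closes the argument''), ultimately deferring to Section~\ref{sec6} and to Koyama; as a standalone argument nothing in the hard direction has been proved. What the paper actually supplies at this point is: the trivialization $D_G\cong\CC[G]\otimes U(\lie)$ by invariant vector fields, the transfer-bimodule computation giving $\pi_+\CC[G]\cong D_{n^2}/\langle I_G,\phi(\lie)\rangle$, simplicity via Kashiwara's equivalence (with the multiplicity-one point settled in Remark~\ref{rem:another} by uniqueness of the invariant measure), maximality of the ideal, and transport through the Fourier auto-equivalence to pass from the Haar distribution $\mu_\pi$ to $c(\Theta)$.

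The fallback routes in your middle paragraph would not close the argument. First, equality of holonomic ranks does not force $J=\ann(c)$: the kernel of $D_{n^2}/J\twoheadrightarrow D_{n^2}/\ann(c)$ could be a nonzero rank-$0$ module supported on the singular locus; already in one variable $\langle x(\partial-1)\rangle\subsetneq\langle\partial-1\rangle=\ann(e^{x})$ is a proper inclusion of ideals of equal rank. One needs simplicity or a torsion-freeness argument, which is precisely how the paper treats the analogous question in Proposition~\ref{thm:inclusions}. Second, the rank computation you propose is not actually available: the paper records that $\rank(J)$ was left open even for $n=3$, that the direct Gr\"obner computation does not finish, and that one needs the invariant-theoretic reduction of Theorem~\ref{thm:inv} and Proposition~\ref{prop:rank4}; moreover $\rank(\ann(c))$ is not known a priori, so ``matching the known dimension of the solution space'' is circular. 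Third, Weyl-closedness of $J$ alone is also insufficient: a Weyl-closed ideal can be properly contained in the annihilator of one of its solutions (e.g.\ $\langle(\partial-1)(\partial-2)\rangle\subsetneq\ann(e^{x})$ is Weyl closed), so that check cannot substitute for the simplicity argument.
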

Above we omitted half of the equations given in \cite[Equation (12)]{koyama}, 
which is justified by the results in \cite[Section 8.7.3]{procesi}. 
Also, the operators $P_{ij}$ are induced from left matrix multiplication (as in \eqref{eq:lieoperator}) 
rather than right multiplication as in \cite[Equation~(11)]{koyama}.

A problem that was left open in \cite{koyama,sei}, even for $n=3$, 
is the determination of the holonomic rank of $\,J$.
We now address this by introducing dimensionality reduction via invariant theory.
Let $\,J'\,$ be the $D$-ideal generated by the operators $P_{ij}, g_{ij}$. 
This is the analogue of~$\,J\,$
for the orthogonal group ${\rm O}(n)$ in its standard representation in ${\rm GL}_n(\CC)$ (see Section~\ref{sec6}).
Since ${\rm O}(n)$ has two connected components, the corresponding
module in Theorem~\ref{thm:push} is a direct sum of two simple holonomic $D_{n^2}$-modules. 
By symmetry, we obtain 
\begin{equation}\label{eq:oso}
\rank (J') \,\,=\,\, 2 \cdot \rank (J).
\end{equation}
The ring of ${\rm O}(n)$-invariant polynomials on $\,\CC^{n\times n}\,$ is generated by the 
$\,\binom{n+1}{2}\,$ entries $\,\{y_{kl} \}_{1 \leq k \leq l \leq n}$
of the symmetric matrix $\,Y\,=\,\Theta^{\text{t}} \cdot \Theta$ (see \cite[Section 11.2.1]{procesi}).  
These matrix entries
$y_{kl}$ are algebraically independent quadratic forms in
the  $n^2$ unknowns $t_{ij}$.

We now work in the Weyl algebra $\,D_{\binom{n+1}{2}}\,$ with the convention
 $\,y_{kl}\,=\,y_{lk}\,$ and $\,\partial_{kl}\,=\,\partial_{lk}$. Let $\,K\,$ denote
 the left ideal in that Weyl algebra which  is  generated by the operators
\begin{equation}\label{eq:k}
h_{ij} \,\,= \,\, 
2^{\delta_{ij}} \, n \cdot \partial_{ij} \,-\, \delta_{ij}\,+\,
\sum_{k, \, l \,=1}^n 2^{\delta_{ki}+\delta_{lj}} \, y_{kl} \cdot \partial_{ik} \partial_{jl}
\qquad {\rm for} \,\, 1\leq i \leq j \leq n. 
\end{equation}

\begin{theorem}\label{thm:inv}
A holomorphic function is a solution to $\,J'\,$ if and only if it is of the 
form $\,\Theta\, \mapsto \phi(y_{ij}(\Theta))$, where $\,\phi\,$ is a solution to $K$. In particular, 
$\,\rank (K) \, = \, 2 \, \cdot \, \rank (J)$.
\end{theorem}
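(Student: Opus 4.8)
The plan is to prove the claimed equivalence of (local, holomorphic) solution spaces and then to read off the rank statement from~\eqref{eq:oso}. Write $y\colon\CC^{n\times n}\to\CC^{\binom{n+1}{2}}$ for the polynomial map whose components are the quadratic forms $y_{kl}(\Theta)=(\Theta^{\text t}\Theta)_{kl}$, and work over the generic locus: let $X^\circ$ denote the invertible matrices and $S^\circ$ the invertible symmetric matrices. Then $y\colon X^\circ\to S^\circ$ is a surjective submersion (its differential at an invertible $\Theta$ is onto), and by the first fundamental theorem of invariant theory for ${\rm O}(n)$ --- see \cite[Section 11.2.1]{procesi} --- its fibers are exactly the orbits of the left multiplication action of ${\rm O}(n,\CC)$, each of which has two connected components. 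We also fix a generic point $\Theta_0\in X^\circ$ lying outside the singular locus of $J'$, with $Y_0:=y(\Theta_0)$ outside the singular locus of $K$.

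I would then establish the key lemma: for every holomorphic function $\phi$ of the variables $y_{kl}$ (with $y_{kl}=y_{lk}$), the composite $\phi\circ y$ is annihilated by all the operators $P_{ij}$ and $g_{ij}$ if and only if $\phi$ is annihilated by all the operators $h_{ij}$ of~\eqref{eq:k}. That $P_{ij}\bullet(\phi\circ y)=0$ is immediate, since a one-line computation gives $P_{ij}\bullet y_{kl}=0$ and each $P_{ij}$ is a derivation. For the $g_{ij}$ one expands $g_{ij}\bullet(\phi\circ y)$ by the chain rule; collecting terms, the full system $\{\,g_{ij}\bullet(\phi\circ y)=0\,\}$ turns into the system $\{\,h_{ij}\bullet\phi=0\,\}$. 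In this step the second-order contractions $\sum_{k}\partial_{ik}\partial_{jk}$ produce the Hessian contraction $\sum_{k,l}y_{kl}\,\partial_{ik}\partial_{jl}$ of~\eqref{eq:k}, the differentiation of $y_{ij}(\Theta)$ produces the first-order term $2^{\delta_{ij}}n\,\partial_{ij}$, and the binomial-type factors $2^{\delta_{ki}+\delta_{lj}}$ and $2^{\delta_{ij}}$ are exactly the multiplicities forced by the identification $y_{kl}=y_{lk}$.

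Granting the lemma, both directions follow. If $\phi$ solves $K$, then $\phi\circ y$ is killed by every generator of $J'$, so it solves $J'$. Conversely let $f$ solve $J'$ on a small ball $\mathcal{U}$ about $\Theta_0$. The action of ${\rm O}(n,\CC)$ is free on $X^\circ$ with the fibers of $y$ as its orbits, so the $\binom{n}{2}$ vector fields $P_{ij}$ span, at each point of $\mathcal{U}$, the tangent space to the fiber of $y$; hence $P_{ij}\bullet f=0$ forces $f$ to be constant along the (connected) fibers of $y|_{\mathcal{U}}$, and after shrinking $\mathcal{U}$ we obtain $f=\phi\circ y$ for a unique holomorphic $\phi$ on $y(\mathcal{U})$. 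Since $y$ is a submersion there, the equations $g_{ij}\bullet f=0$ translate by the lemma into $h_{ij}\bullet\phi=0$; thus $\phi$ solves $K$. Therefore pullback $\phi\mapsto\phi\circ y$ is a linear bijection between the germs of holomorphic solutions of $K$ at $Y_0$ and those of $J'$ at $\Theta_0$ (injectivity is clear because $y$ is dominant), so $\rank(K)=\rank(J')$, and combining with~\eqref{eq:oso} gives $\rank(K)=2\cdot\rank(J)$.

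The step I expect to be the main obstacle is the chain-rule bookkeeping in the key lemma. The individual operators $g_{ij}$ do not descend to differential operators in the variables $y_{kl}$ alone, so one cannot argue operator by operator; one must verify instead that, on functions of the $y_{kl}$, the system of equations cut out by the $g_{ij}$ coincides with that cut out by the $h_{ij}$, and this requires careful control of the symmetrization factors $2^\delta$ appearing in~\eqref{eq:k}. The reduction to functions of the $y_{kl}$ (via the first fundamental theorem) and the comparison of ranks (via~\eqref{eq:oso}) are then routine.
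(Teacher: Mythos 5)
Your overall architecture is the same as the paper's: reduce to functions of the invariants $y_{kl}$, transport the second-order operators by the chain rule, and conclude with~\eqref{eq:oso}. Your local-analytic justification of the reduction (the map $y$ is a submersion on invertible matrices, its fibers are free ${\rm O}(n,\CC)$-orbits whose tangent spaces are spanned by the $P_{ij}$, hence a local solution of $J'$ factors as $\phi\circ y$) is a perfectly good, more self-contained substitute for the paper's appeal to Luna's theorem, and the rank comparison at a generic point is fine.

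The genuine gap is in the key lemma, exactly the step you flag as the main obstacle: the mechanism you describe does not work. Expanding $g_{ij}=\delta_{ij}-\sum_k\partial_{ik}\partial_{jk}$ (contraction over the \emph{column} index) on $\phi\circ y$ with $Y=\Theta^{\mathrm t}\Theta$ does \emph{not} produce the Hessian contraction $\sum_{k,l}y_{kl}\partial_{ik}\partial_{jl}$ nor the term $2^{\delta_{ij}}n\,\partial_{ij}$: the second-order coefficients one gets are products $t_{id}\,t_{jb}$, which are not functions of $Y$, and the first-order contribution is $2\,\delta_{ij}\sum_a \partial\phi/\partial y_{aa}$, a trace term rather than $n$ times a single derivative. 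Concretely, for $\phi=y_{12}$ one has $g_{12}\bullet(\phi\circ y)=0$ while $h_{12}\bullet\phi=2\neq 0$, so no amount of ``collecting terms'' in the $g_{ij}$-expansion yields the $h_{ij}$, not even at the level of the system without further input. The operators whose chain-rule image is \emph{exactly} $h_{ij}$, operator by operator, are the row-contracted relations $\tilde g_{ij}=\delta_{ij}-\sum_k\partial_{ki}\partial_{kj}$, i.e.\ the Fourier duals of the entries of $\Theta^{\mathrm t}\Theta-{\rm Id}$ rather than of $\Theta\Theta^{\mathrm t}-{\rm Id}$. To repair the lemma you must add the ideal-theoretic fact that the entries of $\Theta\Theta^{\mathrm t}-{\rm Id}$ and of $\Theta^{\mathrm t}\Theta-{\rm Id}$ generate the same ideal $I_{{\rm O}(n)}$ (this is the point cited from \cite[Section 8.7.3]{procesi} right after Theorem~\ref{ann-theta}), so that the $\tilde g_{ij}$ lie in $J'$ and the left $D$-ideals $\langle P_{ij},g_{ij}\rangle$ and $\langle P_{ij},\tilde g_{ij}\rangle$ coincide; alternatively, keep the $g_{ij}$, write their action on $\phi\circ y$ as the matrix identity $\Theta B\Theta^{\mathrm t}=(\phi-\mathrm{tr})\,{\rm Id}$ and use invertibility of $\Theta$ to convert it into a system in $Y$, and then still prove that this system is equivalent to $\{h_{ij}\bullet\phi=0\}$. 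Without one of these additions the central equivalence between $J'$-solutions of the form $\phi\circ y$ and $K$-solutions is unproved, and since everything else in your argument hinges on it, the proof is incomplete as it stands.
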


\begin{proof}
The Lie algebra operators $\,P_{ij}\,$ express left invariance under ${\rm SO}(n)$. 
The fact that every solution to $\,J'\,$ is expressible in $\,Y\,$ follows from 
Luna's Theorem~\cite{luna} (see also~\cite[Section 6.4]{git}). 
We note that the determinant $\,\det(\Theta)\,$ is an $\SO(n)$-invariant that we may omit, 
due to the relation $\,\det(\Theta)^2 \,=\, \det Y$. 
The $D$-ideal $\,K\,$ is the invariant version of~$\,J'$. 
The operator $\,h_{ij}\,$ is derived from $\,g_{ij}\,$ by the chain rule.
The result therefore follows from~\eqref{eq:oso}.
\end{proof}

As an application of Theorem~\ref{thm:inv}, we answer a question left open in~\cite[Proposition 2]{sei}.
\begin{proposition}\label{prop:rank4} 
For $n\,=\,3$, we have~$\rank (J)\,=\,4$.
\end{proposition}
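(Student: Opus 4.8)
The plan is to leverage the invariant-theoretic reduction of Theorem~\ref{thm:inv} so as to turn this into a bounded computation in a Weyl algebra on six variables instead of nine. For $n=3$ we have $\binom{n+1}{2}=6$, so $K$ is the left ideal in $D_{6}$ (with the symmetric convention $y_{kl}=y_{lk}$, $\partial_{kl}=\partial_{lk}$) generated by the six second-order operators $h_{11},h_{12},h_{13},h_{22},h_{23},h_{33}$ of~\eqref{eq:k}, whose coefficients are linear forms in the $y_{kl}$. Theorem~\ref{thm:inv} gives $\rank(K)=2\cdot\rank(J)$, so it suffices to prove $\rank(K)=8$, and then $\rank(J)=4$.

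Concretely, I would write out the six operators $h_{ij}$ explicitly in the coordinates $y_{11},\dots,y_{33}$ (the powers $2^{\delta_{ij}}$ and $2^{\delta_{ki}+\delta_{lj}}$ are just bookkeeping for the symmetric convention), form the $R_{6}$-ideal $R_{6}K$, and run Gr\"{o}bner basis computations in the rational Weyl algebra exactly as was done for $I$ in Section~\ref{sec:holonomic}: first {\tt isHolonomic} to confirm that holonomic rank is defined for $K$ (this also follows abstractly from Theorem~\ref{thm:inv} together with holonomicity of $J$), then {\tt holonomicRank}, i.e.\ the value of $\dim_{\CC(y)}(R_{6}/R_{6}K)$, which I expect to return $8$. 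I would cross-check this in a second system, e.g.\ with the {\tt Mathematica} package {\tt HolonomicFunctions}~\cite{CK}. With $\rank(K)=8$ in hand, Theorem~\ref{thm:inv} yields $\rank(J)=4$, thereby answering the question left open in~\cite[Proposition~2]{sei}.

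As a reality check independent of the machine computation, recall that $c(\Theta)$ itself is one solution of $J$, and that (for $n=3$) it depends on $\Theta$ only through the eigenvalues of $\Theta^{\text{t}}\Theta$ together with $\det(\Theta)$; on diagonal $\Theta=\diag(x_1,x_2,x_3)$ it is the function $\tilde c$, which satisfies the rank-$4$ system $I$ of Theorem~\ref{thm:sei2}. Tracing solutions through the reductions behind~\eqref{eq:oso} and Theorem~\ref{thm:inv}, the factor $2$ is exactly the contribution of the two connected components of ${\rm O}(3)$, so an $8$-dimensional solution space for $K$ is the expected answer. The step I expect to be the main obstacle is purely computational: $K$ lives in a Weyl algebra on six variables rather than the three of $I$, so the Gr\"{o}bner basis is substantially heavier, and establishing both holonomicity and the exact rank may require a carefully chosen term order and possibly a modular computation lifted to $\QQ$. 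The only genuinely new mathematical ingredient is Theorem~\ref{thm:inv}, which has already reduced the long-open question about $J$ on $\CC^{9}$ to this finite verification.
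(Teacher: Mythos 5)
Your proposal is correct and follows essentially the same route as the paper: reduce via Theorem~\ref{thm:inv} to the six-variable ideal $K$, verify $\rank(K)=8$ by a Gr\"obner basis computation in the (rational) Weyl algebra, and conclude $\rank(J)=4$. The only difference is the choice of software (the paper used {\tt Macaulay2}, where the rank computation for $K$ terminates even though the direct computation for $J$ does not), which does not change the argument.
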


\begin{proof}
We used the computer algebra system {\tt Macaulay2} \cite{M2}. 
Unlike for $\,\rank (J)$, the calculations for $\,\rank (K)\,$ finished, 
and we found $\,\rank (K)\,=\,8$. 
We conclude by Theorem~\ref{thm:inv}.
\end{proof}


We next explain how the $D_{n^2}$-ideal $\,J$ and the 
$D_n$-ideal $\,I\,$ in~\eqref{defI} are connected. 
The ideal~$\,I\,$ is defined as in (\ref{defI}) for all $n$. We 
use the construction of the {\em restriction ideal}. For the general definition see
\cite[Equation (13)]{satstu}. In our case, the construction works as follows.
We set $\,x_i \,=\, t_{ii}\,$ for $\,i\,=\,1,\ldots,n\,$ and we write $\,D_n\,$ for the corresponding 
Weyl algebra. Then
\begin{equation}
\label{eq:diag}
J_{\text{diag}} \,\,\coloneqq\,\,\left( \,J \, + \, 
\bigl\{ \,t_{ij}\,:\, 1 \leq i \not= j \leq n \bigr\} \cdot D_{n^2}\, \right) \,\, \cap \, \,D_n
\end{equation}
is the $D_n$-ideal obtained by restricting the annihilator of $\,c(\Theta)\,$ 
to the diagonal entries of the matrix $\,\Theta$. Note that the second summand 
in~\eqref{eq:diag} is a {\em right} ideal in the Weyl algebra $\,D_{n^2}$.

If $\,f(\Theta)\,$ is a function in the $\,n^2\,$ variables $\,t_{ij}\,$ that is annihilated by $\,J$, 
then the restriction ideal
$\,J_{\rm diag}\,$ annihilates the function $\,f( {\rm diag}(x_1,\ldots,x_n))\,$ in $\,n\,$ variables.
Therefore, $\,J_{\rm diag}\,$ annihilates the restricted
normalizing constant $\tilde c(x_1,\ldots,x_n)$.
We have the following result.

\begin{proposition}
\label{thm:inclusions}
The following inclusions hold among holonomic $D_n$-ideals representing~$\tilde c$:
$$ I \,\,\subseteq \,\, J_{\rm diag} \,\,\subsetneq  \,\,W(J_{\rm diag})
\,\, \subseteq \,\,{\rm ann}_{D_n}(\tilde c).  $$
Equality holds for $n \leq 3$ in the rightmost inclusion.
\end{proposition}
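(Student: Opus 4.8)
The plan is to verify the chain link by link. The two leftmost relations come down to explicit manipulations in the Weyl algebra, the middle strictness is a Weyl-closure computation in the spirit of Lemma~\ref{lem:W(I)}, the inclusion $W(J_{\rm diag})\subseteq \operatorname{ann}_{D_n}(\tilde c)$ is formal, and the equality for small $n$ is a holonomic-rank count whose only nontrivial input is the rank of the full annihilator of $\tilde c$.

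For $I\subseteq J_{\rm diag}$ it suffices, by the definition of the restriction ideal in~\eqref{eq:diag}, to exhibit each generator $G_i$ and $L_{ij}$ of $I$ as an element of $J+\sum_{i\neq j}t_{ij}D_{n^2}$. This is how the operators~\eqref{hyperpol}--\eqref{so3diff} were produced in the first place: one forms left $D_{n^2}$-combinations of the operators $g_{ij}$, $P_{ij}$, $d$ of Theorem~\ref{ann-theta}, multiplied by suitable polynomials in the $t_{k\ell}$ and by the $\partial_{k\ell}$, normal-orders so that every off-diagonal $t_{k\ell}$ sits on the left, and discards the terms that begin with such a $t_{k\ell}$ (i.e.\ reduces modulo the right ideal $\sum_{i\neq j}t_{ij}D_{n^2}$). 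Carrying out these reductions --- a terminating Gr\"obner-basis computation in $D_{n^2}$ --- returns the $G_i$ and $L_{ij}$ on the nose. Equivalently, one asks a computer algebra system to reduce each $G_i$ and $L_{ij}$ to $0$ modulo $J+\sum_{i\neq j}t_{ij}D_{n^2}$.

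For the right end, recall from the paragraph before the statement that $J_{\rm diag}$ annihilates $\tilde c$, so $J_{\rm diag}\subseteq \operatorname{ann}_{D_n}(\tilde c)$. The annihilator in $D_n$ of \emph{any} function is Weyl-closed: if $P\in R_nJ_{\rm diag}\cap D_n$, write $P=\sum_k r_kQ_k$ with $r_k\in R_n$ and $Q_k\in J_{\rm diag}$; then $P\bullet\tilde c=\sum_k r_k\bullet(Q_k\bullet\tilde c)=0$. Hence $W(\operatorname{ann}_{D_n}(\tilde c))=\operatorname{ann}_{D_n}(\tilde c)$, and applying the inclusion-preserving operator $W$ to $J_{\rm diag}\subseteq\operatorname{ann}_{D_n}(\tilde c)$ gives $W(J_{\rm diag})\subseteq\operatorname{ann}_{D_n}(\tilde c)$. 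The strictness $J_{\rm diag}\subsetneq W(J_{\rm diag})$ asserts that $J_{\rm diag}$ is not Weyl-closed; for $n=3$ this is checked exactly as in Lemma~\ref{lem:W(I)}, by computing $R_3J_{\rm diag}\cap D_3$ with {\tt dmodloc} and producing an operator of $W(J_{\rm diag})$ whose normal form modulo $J_{\rm diag}$ is nonzero (the extra operator of Lemma~\ref{lem:W(I)} is a natural candidate, once one checks it does not already lie in $J_{\rm diag}$), and the small cases are treated the same way.

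Finally, the equality for $n\le 3$ follows from a rank squeeze. Weyl closure never changes holonomic rank, since $R_nW(J_{\rm diag})=R_nJ_{\rm diag}$, so the chain $I\subseteq J_{\rm diag}\subseteq W(J_{\rm diag})\subseteq \operatorname{ann}_{D_n}(\tilde c)$ yields $\rank(I)\ge\rank(J_{\rm diag})=\rank(W(J_{\rm diag}))\ge\rank(\operatorname{ann}_{D_n}(\tilde c))$. Once one knows that the two outer terms agree --- both equal $4$ when $n=3$, using $\rank(I)=4$ (computed after~\eqref{defI}) together with $\rank(\operatorname{ann}_{D_3}(\tilde c))=4$ --- all four ranks coincide, so $R_nJ_{\rm diag}$ and $R_n\operatorname{ann}_{D_n}(\tilde c)$ are nested $R_n$-ideals of the same finite $\CC(x_1,\dots,x_n)$-dimension, hence equal; intersecting with $D_n$ and using Weyl-closedness of $\operatorname{ann}_{D_n}(\tilde c)$ gives $W(J_{\rm diag})=\operatorname{ann}_{D_n}(\tilde c)$. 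The one substantive input, and the main obstacle, is the identity $\rank(\operatorname{ann}_{D_n}(\tilde c))=4$ for $n\le 3$: the naive monodromy argument fails here because $\tilde c$ extends to an entire function on $\CC^n$ (noted after~\eqref{eq:singularlocus}) and is therefore monodromy-invariant, so its orbit cannot span $\Sol(I)$. Instead one computes $\operatorname{ann}_{D_n}(\tilde c)$ directly from the integral representation~\eqref{eq:normalizing2} --- for instance via the quaternionic parametrization of $\SO(3)$, under which $\tilde c$ becomes a Gaussian-type integral over $S^3$ amenable to creative telescoping --- and checks its rank and that its generators lie in $W(J_{\rm diag})$; alternatively one leverages the simplicity of the holonomic $D_{n^2}$-module of Theorem~\ref{thm:push} attached to $\SO(n)$ together with a restriction argument. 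It is this step that does not obviously persist for $n\ge 4$, which is why equality is claimed only for $n\le 3$.
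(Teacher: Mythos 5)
Your treatment of the first three links matches the paper's: $I\subseteq J_{\rm diag}$ comes from the way the generators of $I$ are derived in \cite{sei} (reduction modulo the right ideal generated by the off-diagonal $t_{ij}$), the strictness of the middle inclusion is a Weyl-closure computation in the spirit of Lemma~\ref{lem:W(I)}, and $W(J_{\rm diag})\subseteq\operatorname{ann}_{D_n}(\tilde c)$ follows because the annihilator of the analytic function $\tilde c$ is Weyl-closed. Your rank-squeeze logic for the final claim is also sound as far as it goes: if one knew $\operatorname{rank}(\operatorname{ann}_{D_3}(\tilde c))=4$, then $R_3J_{\rm diag}=R_3\operatorname{ann}_{D_3}(\tilde c)$ and Weyl-closedness of the annihilator would indeed finish the proof.

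The gap is exactly the input you yourself flag as ``the main obstacle'': you never establish the lower bound $\operatorname{rank}(\operatorname{ann}_{D_3}(\tilde c))\ge 4$. Your first suggestion --- computing $\operatorname{ann}(\tilde c)$ by creative telescoping from the integral representation --- cannot work as stated, because any such computation produces annihilating operators, i.e.\ a sub-ideal $I'\subseteq\operatorname{ann}(\tilde c)$, and this only yields the \emph{upper} bound $\operatorname{rank}(\operatorname{ann}(\tilde c))\le\operatorname{rank}(I')$; the inequality you need goes the other way. Your second suggestion (invoke the simplicity from Theorem~\ref{thm:push} plus ``a restriction argument'') points in the right direction but is not an argument: the restriction of a simple $D$-module to a linear subspace is in general far from simple, so something substantive must be supplied. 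What the paper actually does is prove directly that $D_3/W(I)$ is a \emph{simple} $D$-module --- by checking that the Fourier transform $W(I)^{\mathcal F}$ has holonomic rank $1$ and that both $D_3/W(I)^{\mathcal F}$ and its holonomic dual are torsion-free over $\CC[x_1,x_2,x_3]$ --- so that $W(I)$ is a maximal left ideal; since $W(I)\subseteq\operatorname{ann}_{D_3}(\tilde c)\subsetneq D_3$, maximality forces equality and the whole chain collapses. Some version of this simplicity/maximality step, or an independent proof that the $D$-module generated by $\tilde c$ has rank $4$, is indispensable and is missing from your write-up.
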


\begin{proof}[Proof sketch]
The proof of~\cite[Theorem 1]{sei} shows that~$\,I\,$
is contained in $\,J_{\diag}$.
The middle inclusion is strict by
Lemma \ref{lem:W(I)}. We have
$\,W(J_{\rm diag})\,\subseteq\,
{\rm ann}_{D_n}(\tilde c)\,$ because
the annihilator of a smooth function such as $\,\tilde c\,$ is Weyl-closed,
by an argument spelled out in~\cite{GLS}.  

The equality on the right for $n=3$ is shown by proving 
$\,W(I) \,=\, {\rm ann}_{D_3}(\tilde c)$.
We use the following  argument and computations. 
The Fourier transform $\,W(I)^{\mathcal F}\,$  is the $D$-ideal 
obtained by switching $\partial_i$ and $x_i$ (up to sign).
We find that its holonomic rank is~$1$.
We next compute the {\em holonomic dual} of the module $\,D_3/W(I)^{\mathcal F}$.
This is another $D_3$-module, as defined in \cite[Section 2.6]{htt}.
There is a built-in command for the holonomic dual
in {\tt Macaulay2}~\cite{M2}.
Another computation, using localization techniques, verifies that both $\,D_3/W(I)^{\mathcal F}\,$  
and its holonomic dual are torsion-free as $\CC[x_1,x_2,x_3]$-modules.
These facts imply that $\,D_3/W(I)^{\mathcal F}\,$ is a simple $D$-module, and hence so is $\,D_3/W(I)$. 
From this we conclude that $\,W(I) \,=\, {\rm ann}_{D_3}(\tilde c)$.
\end{proof}

We conjecture that the inclusion on the right is an equality for
all positive integers $n$.
Using results from Section~\ref{sec6}, we can 
argue that $\,W(J_{\rm diag})^{\mathcal{F}}\,$ is regular holonomic for any~$n$.
It appears that its singular locus is a hyperplane arrangement.
The special combinatorial structure encountered in this arrangement gives
strong evidence for the conjecture above.

\section{Maximum likelihood estimation} \label{HGDinaction}

We now proceed to finding the maximum of the log-likelihood 
function of Lemma \ref{MLEdiag} for given datasets.
Since the objective function~\eqref{MLEmax} is strictly 
concave, a local maximum
is the global maximizer and attained at a unique point 
$\hat{x}=(\hat{x}_1,\hat{x}_2,\hat{x}_3)\in \RR^3$.
In order to compute~$\hat{x}$, we run a number of algorithms,
each using the holonomic gradient method. This is based on
the results presented in the previous section, especially
on Theorem \ref{thm:sei2} and Equation~\eqref{Hessian}.
These are used to compute the function values, gradients, and Hessians
in each iteration.

A critical step in running any local optimization method is finding a suitable starting point.
As mentioned in Section \ref{sec:holonomic}, solutions to the $D$-ideal $\,I\,$ are 
analytic outside the singular locus $\text{Sing}(I)$. Starting points need to be chosen 
in~$\RR^3\backslash \text{Sing}(I)$.
For the Fisher model on $\text{SO}(3)$, the singular locus $\text{Sing}(I)$ is 
the arrangement (\ref{eq:singularlocus}) of six planes through the origin in~$\RR^3$.
This partitions~$\RR^3$ into $24$ distinct chambers. For the algorithms  
described below,
we choose starting points in each of the $24$ connected components of 
$\,\RR^3/\text{Sing}(I)$,  and we evaluate
the vector $C$ at these points. This initialization can be done either via the series expansion 
method of \cite[Section 3.2]{sei} or using the package {\tt hgm} \cite{hgmR} 
in the statistical software~{\tt R}.

In this section, we present three optimization methods based on algebraic analysis.
The simplest is {\em Holonomic Gradient Ascent} (HGA). This
is a straightforward adaptation of the HGD method in~\cite{sei}.
Second, we introduce a holonomic version
 of the  Broyden--Fletcher--Goldfarb--Shanno (BFGS) method \cite[Chapter 6, \S 1]{NocWri}.
 BFGS is a quasi-Newton method that requires the
gradient and the function value as inputs. Both can be calculated directly
using (\ref{PfaffianSO3}). This turns BFGS into {\em Holonomic BFGS} (H-BFGS).
The third algorithm to be introduced is a {\em Holonomic Newton Method}.
This second-order method exploits the fact that
the Hessian is easy to calculate from (\ref{Hessian})
and that the objective function is strictly concave.

To get started, we need 
an expression for the gradient of the log-likelihood function $\,\ell\,$ and a
Holonomic Gradient Method (HGM) for evaluating that expression.
By Lemma~\ref{MLEdiag}, 
\begin{equation} \label{eq:nablaell}
\nabla\ell(x) \,\,= \,\,\begin{pmatrix}
g_1\\ g_2\\ g_3
\end{pmatrix} \,-\, \frac{1}{\tilde c(x)} \cdot \nabla \tilde c(x).
\end{equation}
Note that 
$\,C(x) \,=\, (\tilde c(x),\nabla \tilde c(x))^\text{t}$.
Hence, our task to evaluate $\,\nabla\ell\,$ at any point 
amounts to evaluating the vector-valued function $\,C\,$ at any point. 
This is where the HGM comes in.

In general, we approximate the function 
$\,C\,$ at a point $\,x^{(n+1)}\,$ given its value at a previous point
$\,x^{(n)}\,$.
To this end, a path 
$\,x^{(n)}\to x^{(n)}+\delta^{(1)}\to x^{(n)}+\delta^{(2)}\to\dots\to x^{(n)}+\delta^{(K)}\to x^{(n+1)}\,$
is chosen, where $\,\delta^{(1)},\ldots,\delta^{(K)}\,\in\, \RR^3\,$ with 
$\,\Vert \delta^{(m+1)}-\delta^{(m)} \Vert\,$ sufficiently small.
The linear part of the Taylor series expansion
of $C$ at $x^{(n)}$ yields the following approximations:
\begin{align}
\begin{split}
C(x^{(n)}+\delta^{(m+1)})
&\,\,\approx\,\, C(x^{(n)}+\delta^{(m)}) \,+\, \sum_{i=1}^3 \,(\delta^{(m+1)}_i - \delta^{(m)}_i)\,(\partial_i\bullet C)(x^{(n)}+\delta^{(m)})\\
&\,\,=\,\, C(x^{(n)}+\delta^{(m)}) \,+\, \sum_{i=1}^3 \,(\delta^{(m+1)}_i - \delta^{(m)}_i) \, P_i  \cdot C(x^{(n)}+\delta^{(m)}).
\end{split}
\label{eq:hgm}
\end{align}
We choose a path consisting of points, separated by intervals of size~$\Delta t$, 
on the line segment 
$\,x(t) \,=\, x^{(n)}(1-t) \,+\, x^{(n+1)}t\,$ with $t\in[0,1]$. 
With this notation, Equation~\eqref{eq:hgm} becomes
\begin{equation}
\begin{matrix}
C(x((m+1)\Delta t)) \,\,\approx \,\,
C(x(m\Delta t)) \,+\, \sum_{i=1}^3\, (x^{(n+1)}_i-x^{(n)}_i)\,\Delta t \cdot \,P_i\cdot C(x(m\Delta t)).
\end{matrix}
\end{equation}
If we take the limit $\Delta t \to 0$, then the equation above becomes the differential equation 
$$\frac{dC(t)}{dt} \,\,=\,\, \sum_{i=1}^3\frac{\partial x_i}{\partial t}\frac{\partial C}{\partial x_i} 
\,\,=\,\, \sum_{i=1}^3  \left(x_i^{(n+1)}- x_i^{(n)}\right)P_i \cdot C.$$ 
This ordinary differential equation can be solved using any numerical 
ODE solver, e.g.,~an Euler scheme or Runge--Kutta scheme.
This leads to the following algorithm.

\vspace{12pt}
\begin{algorithm}[H]
	\DontPrintSemicolon
	\KwIn{$x^{(n)}$, $\,x^{(n+1)}$, $\,C(x^{(n)})$, a Pfaffian system $\,P_1,P_2,P_3$}
	\KwOut{$C(x^{(n+1)})$}
	Set $\,x(t) \,=\, x^{(n)}(1-t) \,+\, x^{(n+1)}t$.\;
	Let $\,\frac{dC(t)}{dt} \,=\, \sum_{i=1}^3\frac{\partial x_i}{\partial t}\frac{\partial C}{\partial x_i} \,=\, \sum_{i=1}^3  \left(x_i^{(n+1)}- x_i^{(n)}\right)P_i \cdot C$.\; \label{line:eq}
	Numerically integrate line \ref{line:eq} from $\,t\,=\,0\,$ to $\,t\,=\,1$.
	\caption{Holonomic Gradient Method}
	\label{alg:HGM}
\end{algorithm}
\vspace{12pt}

We employ Algorithm \ref{alg:HGM} as a subroutine for 
the holonomic gradient ascent algorithm, which will be described next.
HGA is analogous to other gradient ascent/descent methods, however, 
with the special feature that 
the gradients are calculated via the HGM algorithm.
A description of the algorithm, adapted for data from $\text{SO}(3)$, is outlined below.

\vspace{12pt}
\begin{algorithm}[H]
	\DontPrintSemicolon
	\KwIn{Matrices $\,Q\,$ and $R$, singular values $\,g_1,g_2,g_3\,$ and a starting point~$x^{(0)} \in\RR^3$}
	\KwResult{A maximum likelihood estimate for the data in the Fisher model 
	(\ref{eq:fisherdensity})}
		Choose a learning rate $\gamma_n$.\;
	Choose a threshold $\delta$.\;
	Evaluate $\,C\,$ at the starting point $x^{(0)}$.\; 
	Evaluate $\,\nabla\ell\,$ at the starting point $x^{(0)}$.\;
	Set $n \,=\,0$.\;
	\While{$\max |\nabla\ell(x^{(n)})|\,<\, \delta$}{
		$x^{(n+1)} \,=\, x^{(n)} \,+\, \gamma_n\nabla\ell(x^{(n)})$.\;
		Calculate $\,C(x^{(n+1)})\,$ via HGM using Algorithm \ref{alg:HGM}.\;
		Calculate $\,\nabla\ell(x^{(n+1)})\,$ from $C(x^{(n+1)})$.\;
		Set $n \,=\,n+1$.\;
	}
	Output vector $\,x^{(n)} \,\in\, \RR^3\,$ as our approximation for $(\hat x_1, \hat x_2, \hat x_3)$. \;
Output the rotation matrix $\,\hat \Theta \, = \, Q \cdot x^{(n)} \cdot R\,$ 
as our approximation for the MLE.
	\caption{Holonomic Gradient Ascent}
	\label{alg:HGA}
\end{algorithm}
\vspace{12pt}

The given data is a list of rotation matrices $\,Y_1,\ldots,Y_N\,$ in ${\rm SO}(3)$.
As explained in Section~\ref{sec2}, we encode these 
in the singular values $\,g_1,g_2,g_3\,$ of the 
sample mean~$\,\bar{Y}\,=\,\frac{1}{N}\sum_{k=1}^N Y_k $.
Thus, the input for HGA consists primarily of
just three numbers $\,g_1,g_2,g_3$. 
They are used in the evaluation in the first terms of~$\nabla \ell$, 
as seen in (\ref{eq:nablaell}). The second term is evaluated 
by matrix multiplication with $\,P_1,P_2,P_3$, as seen in~(\ref{PfaffianSO3}).
Part of the input are also the matrices $\,Q\,$ and $\,R\,$ that 
 diagonalize the sample mean
$\,\bar{Y}$. They are needed in the last step to recover $\,\hat \Theta\,$ from
 $\,\hat x_1,\hat x_2, \hat x_3 \,$ as in Lemma~\ref{MLEdiag}.	
The HGA algorithm has two parameters, namely the threshold $\,\delta\,$ 
which indicates a termination condition,
and the learning rate $\gamma_n$. 
While $\,\delta\,$ can be chosen freely depending on the desired accuracy,
choosing the learning rate can have significant effects on the convergence of the algorithm. 
In our computations we chose  $\,\gamma_n \,=\, 10^{-2}$.
This can clearly be improved. However, the standard technique of
performing line searches to find a good $\,\gamma_n\,$ is not recommended 
as evaluating $\,C\,$ at a new point is costly. 

To employ more advanced methods such as BFGS, and  to avoid 
integrating along a path crossing the singular locus, we use~\cite[Corollary 1]{sei}.
This  states that the value of $\,C\,$ at a point $(x_1,x_2,x_3)$ can be obtained
by integrating the following  ODE from $t = \epsilon \ll 1\,$ to~$t=1$:
\begin{equation}
\frac{dC}{dt} \,\,=\,\, \begin{pmatrix}
0 & x_1 & x_2 & x_3\\
x_1 & -2/t & x_3 & x_2\\
x_2 & x_3 & -2/t & x_1\\
x_3 & x_2 & x_1 & -2/t
\end{pmatrix}\cdot C.
\label{eq:CorC}
\end{equation}

Using this approach for calculating $\,C$, we can employ BFGS optimization using HGM 
as a subroutine to calculate the gradients and function values required as inputs.
The H-BFGS method achieves much faster convergence rates than the simple HGA algorithm~\ref{alg:HGA}.

A final very powerful algorithm for concave (or convex) functions is the 
Newton method which uses the Hessian matrix.
Often, finding the Hessian matrix ${\,\bf H}[\ell(x)]\,$ of a function is a difficult task. 
However, using holonomic methods
the Hessian is a obtained for free via
$$
\partial_i\partial_j\bullet \ell \,\,\,=\,\,\, \frac{1}{\tilde{c}^2}\,(\partial_i\bullet \tilde{c})\,
(\partial_j\bullet\tilde{c}) \,-\, \frac{1}{\tilde{c}}\,\partial_i\partial_j\bullet \tilde{c},
$$
and the relations in (\ref{PfaffianSO3}) and (\ref{Hessian}).
We found that the Newton method,
$$
x^{(n+1)} \,\,=\,\, x^{(n)} \,-\, {\bf H}[\ell (x)]^{-1}\cdot \nabla \ell (x),
$$
gives the fastest convergence. We refer to this approach as the {\em Holonomic Newton Method}.

 We implemented the H-BFGS method  in a script in the software {\tt R}.
 Interested readers may obtain our implementation from the first author.
This code is custom-tailored for rotations in $3$-space.
The function $C$ is evaluated at the starting point $\,x^{(0)}\,$ using the series expansion method 
that is described in~\cite[Section 3.2]{sei}. Here we truncate the series at order~$41$.

\begin{example} \label{ourdata}
We created a synthetic dataset consisting of $\,N=500\,$ rotation matrices.
These were sampled from the Fisher distribution with parameter matrix
\begin{equation}
\Theta \,\,=\,\, \begin{pmatrix}
-1.178 & 0.2804 & 1.037\\
-0.3825 & 0.9181 & 0.6016\\
-0.0955 & 0.9037 & 1.695
\end{pmatrix}.
\end{equation}
The sample mean and its sign-preserving singular value decomposition are
found to be
\begin{align*}
&\bar{Y} \,\,=\,\, \begin{pmatrix}
-0.2262 & 0.1021 & 0.2260\\
-0.0233 & 0.0611 & 0.2779\\
-0.0364 & 0.2802 & 0.3529
\end{pmatrix}\,\, = \,\, Q\cdot\begin{pmatrix}
0.5946 & 0.0000 & 0.0000\\
0.0000 & 0.1838 & 0.0000\\
0.0000 & 0.0000 & 0.1059
\end{pmatrix}\cdot R, \\ {\rm with} \quad
& Q \,\,=\,\, \begin{pmatrix}
-0.4977 & 0.8589 & 0.1211\\
-0.4518 & -0.1376 & -0.8815\\
-0.7404 & -0.4934 & 0.4565
\end{pmatrix},\quad R \,\,=\,\, \begin{pmatrix}
0.2524 & -0.4808 & -0.8397\\
-0.9419 & -0.3209 & -0.0993\\
-0.2217 & 0.8160 & -0.5339
\end{pmatrix}.
\end{align*}
Running \mbox{H-BFGS}  on this input, the MLE is found to be
\begin{equation}
\hat{\Theta} \,\,=\,\, \begin{pmatrix}
-0.8972 & 0.3446 & 0.9682\\
-0.2392 & 0.7777 & 0.7856\\
-0.0763 & 0.8664 & 1.616
\end{pmatrix}\, \,=\,\, Q\cdot \begin{pmatrix}
2.422 & 0.0000 & 0.0000\\
0.0000 & 0.7432 & 0.0000\\
0.0000 & 0.0000 & -0.3043
\end{pmatrix}\cdot R.
\end{equation}
While the entries of the MLE $\, \hat{\Theta}\,$ have the correct sign and order of magnitude, 
the actual values are not very close to those in~$\,\Theta$.
In order to isolate the effect of the sample size on the MLE, we extended the data to $10 000$ matrices. 
In the iterations we recorded
the Frobenius distance (FD) from~$\, \hat{\Theta}\,$ to~$\,\Theta\,$ and the logarithm of the 
likelihood ratio (LR) of the exact parameter and the MLE. 
Our findings are outlined in the table below.
\begin{center}
\begin{tabular}{|r|r|r|r|r|}
	\hline
	\textbf{\# Data} & \textbf{H-BFGS FD} & \textbf{Newton FD} &\textbf{ \mbox{H-BFGS} LR} & \textbf{Newton LR}\\
	\hline
	1000 & 0.2136 & 0.2136 & 0.07006 & -0.0007\\ 
	2000 & 0.1145 & 0.1145 & 0.08828 & -0.0013\\
	3000 & 0.1155 & 0.1155 & 0.07837 & -0.0012\\
	4000 & 0.1485 & 0.1485 & 0.08185 & -0.0014\\
	5000 & 0.1700 & 0.1700 & 0.07439 & -0.0009\\
	6000 & 0.1247 & 0.1247 & 0.07325 & -0.0006\\
	7000 & 0.1321 & 0.1321 & 0.07248 & -0.0006\\
	8000 & 0.1011 & 0.1011 & 0.07294 & -0.0003\\
	9000 & 0.0985 & 0.0985 & 0.07127 & -0.0002\\
	10000 & 0.0838 & 0.0838 & 0.07219 & -0.0002\\
	\hline
\end{tabular}
\end{center}

\medskip

In our experiments we found that
the  convergence in likelihood ratio and Frobenius distance is slow.
It appears that, in general, the MLE problem is not very well conditioned.
\end{example}

\begin{remark}
	The authors in~\cite{sei} report that the HGD algorithm becomes numerically unstable 
when it is	close to the singular locus of the Pfaffian system.
	They recommend~picking a starting point in the same connected component
	of $\,\RR^3 \setminus {\rm Sing}(I)\,$
	where the MLE is suspected.
	In contrast, our computations suggest that the output of the HGA does not depend 
	on the connected component which the starting point lies in, 
	when a sufficiently stable numerical integration method (e.g.~{\,\tt lsode\,} 
	from the {\,\tt R\,} package {\,\tt deSolve}\,) is chosen in Algorithm~\ref{alg:HGM}.
	\end{remark}
	
\begin{remark}	\label{rem:orbitopes}
The sample mean matrix $\,\bar{Y}\,$ lies in the convex
hull of the rotation group. This convex body, denoted ${\rm conv}({\rm SO}(3))$,
was studied in~\cite[Section 4.4]{orbi}, and an explicit
representation as a spectrahedron was given in~\cite[Proposition 4.1]{orbi}.
It follows from the theory of orbitopes~\cite{orbi} that
the singular values of matrices in $\,{\rm conv}({\rm SO}(3))\,$
are precisely the triples that satisfy $\,1 \geq |g_1| \geq g_2 \geq g_3 \geq 0$. 
These inequalities define two polytopes, which are responsible for
the facial description of $\,{\rm conv}({\rm SO}(3))\,$ found in~\cite[Theorem~4.11]{orbi}.

We can think of the MLE as a map from the
interior of the orbitope $\,{\rm conv}({\rm SO}(3))\,$
to~$\RR^3$. Using the singular value decomposition, we restricted this map
 to the open polytopes given by $1 > |g_1| > g_2 > g_3 > 0$.
Note that the coordinates of the vector $\,\hat{x}\,$ goes off to infinity as 
the maximum of $\, \{ g_1,g_2,g_3\}\,$ approaches~$\,1$.
This follows from~\cite[Equation~(4.12)]{KM},
where the analogue for $\,\text{O}(n)\,$ was derived.
This divergence can cause numerical problems.
\end{remark}

In this section, we have turned the earlier results on $D$-ideals into 
numerical algorithms. This is just a first step.
The success of any local method relies heavily
on a clear understanding of the numerical analysis
that is relevant for the problem at hand.
A future study of condition numbers from the perspective
of holonomic representations would be desirable.

\section{Rotation data in the sciences} \label{sec5}

Rotation data arise in any field of science in which the orientation of an object
in $3$-space is important. Occurrences include
a diverse number of research areas such as
medical imaging, biomechanics, astronomy, geology, and materials science. 
In this section, we apply our methods to a 
prominent dataset of vectorcardiograms and to biomechanical data. 
We also review previous findings
on rotation data in astronomy, geology, and materials science.

\subsection{Medical imaging}
\label{medima}

One important occurrence of rotational data in the applied sciences 
stems from medical imaging, 
and more precisely from vectorcardiography.
In that field, the electrical forces generated by the heart~are studied
 and their magnitude and direction are recorded. 
 
The dataset presented in~\cite{DLM} is a famous example of 
directional data. It contains the orientation of the 
vectorcardiogram (VC) loop of $98$ children aged~$2-19$. 
In particular, the orientation is measured using two 
different techniques. Both measurements are given in the 
form of two vectors. The first identifies the VC loop of greatest magnitude and 
the second is  the normal direction to the~loop. 
We add as a third vector the cross product of the magnitude 
and normal vector to form a right handed set and, therefore, a 
rotation matrix.

This dataset has 
been used to exemplify a range of methods in directional statistics, 
see, e.g.,~\cite{PrenII}. We applied the optimization methods from
Section \ref{HGDinaction} to the same dataset. In other words,
we computed the maximum of the log-likelihood function
\eqref{MLEmax} for the orientations of the VC loop.
In order to match our analysis with the results of
\cite{PrenII}, we only consider the $28$ data points of the boys aged $2-10$.
A colorful illustration of the action of these $28$ rotation matrices on
the coordinate axes is shown in Figure~\ref{fig:MedData}.

 We now proceed
to the MLE. The sample mean has the  singular valued decomposition
\begin{equation}
\bar{Y} \,\,=\,\, \begin{pmatrix}
0.6868 & 0.5756 & 0.1828\\
0.5511 & -0.7372 & -0.0045\\
0.1216 & 0.1417 & -0.8630
\end{pmatrix} \,\,=\,\, Q \cdot\begin{pmatrix}
0.9469 & 0.0000 & 0.0000\\
0.0000 & 0.8962 & 0.0000\\
0.0000 & 0.0000 & 0.8737
\end{pmatrix}\cdot R,
\end{equation}
where
\begin{equation}
Q \,\,=\,\, \begin{pmatrix}
0.6112 & 0.7636 & 0.2079\\
-0.7498 & 0.4748 & 0.4608\\
0.2532 & -0.4376 & 0.8628
\end{pmatrix},\quad R \,\,=\,\, \begin{pmatrix}
0.03941 & 0.99324 & -0.1092\\
0.81778 & 0.03072 & 0.5747\\
0.57418 & -0.11194 & -0.8110
\end{pmatrix}.
\end{equation}
By forming the matrix product $\,QR\,$ we recover the
result of~\cite{PrenII}. The matrix $QR$, however, is only one part
of the MLE as described in~\cite{KM}.
By using \mbox{H-BFGS}, we can find the full
MLE of the Fisher model. We compared \mbox{H-BFGS}  to other methods. 
For that, we estimated
$\,x_1,x_2,x_3\,$ with a BFGS optimization of the log-likelihood using the
series expansion of the normalizing constant. We then compare the
resulting estimate to the output of \mbox{H-BFGS}.  

The \mbox{H-BFGS} algorithm finds the MLE 
$$\hat{x}_1 \,\,=\,\, 20.072407,\quad \hat{x}_2  \,\,=\,\,  12.513841, \quad  \hat{x}_3  \,\,=\,\,  -6.510704,$$ 
which corresponds to a log-likelihood of $\,\hat{\ell}  \,=\,  3.97299$. 
The runtime of the algorithm is highly dependent on the number of non-zero terms 
in the series expansion for $\tilde{c}$. 
In this calculation, the first $6000$ non-zero terms are used and the runtime is about $4$ seconds.
The classical BFGS method is not convergent if only the first $6000$ non-zero terms are used. 
Hence, we need to truncate the series expansion at higher order. 
If we use the first $48000$ non-zero terms, 
then the series expansion BFGS method finds the MLE 
$\,\hat{x}_1  \,=\,  17.604156,\, \hat{x}_2  \,=\,  10.024591,\, \hat{x}_3  \,=\,  -3.881811$,
which gives~$\hat{\ell}  \,=\,  3.96330$. The computation takes about $20$ seconds.
Hence, the holonomic BFGS outperformed the classical method by finding a better 
likelihood value in much shorter time.

\subsection{Biomechanics}

Rotational data is ubiquitous in the biomedical sciences. 
A prominent experiment in this area is the human kinematics 
study of~\cite{RRA}. In this experiment, the rotations of four 
different upper body parts were tracked while the subject 
was drilling holes into six different locations of 
a vertical panel. In~\cite{BNVII}, this dataset was studied and 
maximum likelihood and Bayesian point estimates for the 
orientation of the wrist were obtained and credible regions 
constructed.

A further experiment concerns the heel orientation of primates. 
In the experiments, the rotation of the calcaneus bone (the heel) 
and the cuboid bone, which is horizontally adjacent to the 
heel and closer to the toes, was measured. A load was applied 
to three sedentary primates, a human, a chimpanzee, and a 
baboon and the rotation of their ankle was recorded. While 
the data is actually a time series, the simplifying assumption 
of independent identically distributed data is made 
in its analysis~\cite{BNV}. 
We study this dataset which was kindly provided by 
Melissa Bingham. 
The sample mean for the human data equals

\begin{equation} \label{eq:YQR}
\bar{Y} \,\,=\,\, \begin{pmatrix}
	-0.1013 & -0.9127 & -0.3811\\
	0.3275 & -0.3895 & 0.8535\\
	-0.9335 & -0.0358 & 0.3475
\end{pmatrix} \,\,=\,\, Q\cdot \begin{pmatrix}
	0.9997 & 0.0000 & 0.0000\\
	0.0000 & 0.9926 & 0.0000\\
	0.0000 & 0.0000 & 0.9923
\end{pmatrix}\cdot R,
\end{equation}
with
$$
Q \,\,=\,\, \begin{pmatrix}
	0.4771 & 0.8753 & -0.0791\\
	-0.4320 & 0.1552 & -0.8884\\
	-0.7654 &  0.4580 & 0.4521
\end{pmatrix}\,\,\text{\normalsize and } \,\,R \,\,=\,\, \begin{pmatrix}
	0.5248 & -0.2399 & -0.8167\\
	-0.4690 & -0.8822 & -0.0422\\
	-0.7104 & 0.4051  & -0.5754
\end{pmatrix}.
$$

We see on the right hand side in~\eqref{eq:YQR} that the singular values 
for this dataset only differ in the third significant figure and the smallest 
singular value is approximately $1$. We found that the 
normalizing constant gets too large to be computed directly.
Indeed,
our simulations returned a value error when $\,\tilde{c} \approx 10^{308}$.
This is a serious numerical issue, arising in any MLE algorithm that attempts to directly 
calculate $\,\tilde{c}\,$ when the sample mean is almost a rotation
matrix. Singular values close to one imply that the samples are 
 concentrated on the unit sphere. One could either 
use a rotational Maxwell distribution~\cite{Kag} as a local model 
or the approximation used in~\cite{BNV}. The data for the 
baboon and the chimpanzee show similar traits.

We found that  progress can be made by applying a gauge transform in Equation~\eqref{eq:CorC}, aimed at
scaling the input for H-BFGS.
Let $\,\lambda_0\,$ be the largest eigenvalue of 
$$
A \,\,=\,\, \begin{pmatrix}
0 & x_1 & x_2 & x_3\\
x_1 & 0 & x_3 & x_2\\
x_2 & x_3 & 0 & x_1\\
x_3 & x_2 & x_1 & 0
\end{pmatrix}.
$$
We can derive an ODE for the function $\,D \,=\, C\cdot \text{exp}(-\lambda_0 t)\,$ 
from Equation~\eqref{eq:CorC}. 
The function $\,D\,$ is guaranteed to have smaller values than $\,C$. Furthermore,
the ratio $\,(\partial_i \bullet\tilde{c})/\tilde{c} \,=\, C_i/C_0 \,=\, D_i/D_0\,$ is invariant. 
Despite being able to compute $\,\log(\tilde{c})\,$ using the gauge transformation, MLE
becomes very unstable due to the numerical accuracy required. Finding the MLE from a random
starting point using H-BFGS proved intractable. 
However, using the asymptotic formula of~\cite{KM} to provide a suitable starting point for H-BFGS, 
we found the MLE $\hat{x}_1 = 5543.106,\, \hat{x}_2 = 3753.078,\, \hat{x}_3 = -3685.242$ 
corresponding to a log-likelihood of $\hat{\ell} = 10.59342$.
The asymptotic formula yielded an MLE of 
$\hat{x}_1 = 5543.102,\, \hat{x}_2 = 3753.025,\, \hat{x}_3 = -3685.298$ and $\hat{\ell} = 10.52366$. 
Hence, H-BFGS finds a slightly better MLE than the asymptotic formula.

\subsection{Astronomy and geology}

Astronomical applications of the matrix Fisher model on $\,\text{SO}(3)\,$ 
are often concerned with the orbits of near earth objects~\cite{MJ,sei}. 
Such objects are comets or asteroids in an elliptic orbit around the sun 
with the sun in their focus.
The data comes as sets of vectors in~$\,\RR^3\,$ 
taking the sun as the origin. The first vector, $X_1$, is 
the perihelion direction, which points to the
location on the orbit closest to the sun. The second vector, 
$X_2$, is the unit normal to the orbit. Together with their 
cross product these vectors form a right 
handed set.
 Therefore, they define a rotation matrix. 
Questions of astronomical interest are whether the 
perihelion direction is uniformly distributed on the 
sphere and whether the orbit orientations are uniform 
on~$\,\text{SO}(3)$. To answer the latter question the Raleigh 
statistic can be used~\cite{MJ,sei}.

Sei et al.~\cite{sei} studied a dataset of rotations representing 
$151$ comets and $6496$ asteroids.
They computed maximum likelihood estimates using the holonomic gradient method and 
also series expansions. The Raleigh statistic for the 
dataset was calculated and the null hypothesis of a uniform 
distribution was strongly rejected. Further, the hypothesis 
of the data originating from a Fisher distribution on a Stiefel 
manifold was tested against the hypothesis of $\,\text{SO}(3)$, and 
the evidence strongly suggested to reject the Stiefel manifold.

Rotations arise in geology and earth sciences in the 
study of earthquake epicenters~\cite{Kag} and the analysis of
plate tectonics~\cite{DT}. 
Davis and Titus~\cite{DT} studied a dataset of the deformation 
of a shear zone in northern Idaho. However, this was done
in the context of invalidating a geology inspired model that 
had been used previously to explain the shear deformations. 

Kagan \cite{Kag} studied rotational data describing the earthquake focal 
mechanism orientation.  Various models, including 
the Fisher model, were discussed in this article. However, the Fisher model 
was dismissed due to the difficulty of normalization for 
small spread data as discussed in Remark~\ref{rem:norm}. 
The alternative model used in \cite{Kag} was a rotational Maxwell distribution 
as a local approximation. Our results offer a chance to revisit the Fisher model.

\subsection{Materials science}

One important source of rotational data is materials science, 
where patterns from electron 
backscatter diffraction (EBSD) 
are analyzed (see, e.g.,~\cite{BHJPSW}). 
This type of data provides information about the orientation 
of grains within a material. Crystal orientation has important implications 
on the  properties of polycrystalline materials. 
One issue with EBSD data is the fact that orientations of the crystals can only be 
determined within a coset of the crystallographic group the 
grain belongs to. This is due to the fact that a crystal is a 
lattice and every lattice comes with certain translational and 
rotational symmetries. Orientations can only 
be determined up to the rotational invariance of the lattice.
Hence, the data, although giving information about rotations, is
strictly speaking not on~$\SO(3)$, but on its quotient by
a discrete symmetry subgroup. To adapt our analysis, an appropriate
parametrization or embedding for such a quotient needs to be found.
This, however, is beyond the scope of this paper and is left for
future work. Before going to such manifolds, we start with Lie groups.

\section{Compact Lie groups}\label{sec6}

The Fisher model on $\,\SO(n)\,$ generalizes naturally to other compact Lie groups.
We define the Fisher distribution and the normalizing constant as in \eqref{eq:fisherdensity} 
and \eqref{eq:normalizing1}, but with integration over 
the Haar measure on the Lie group. In this section, we
introduce these objects and their holonomic representation.
In particular, we establish the analogue of
Theorem \ref{ann-theta} for compact Lie groups.
This opens up the possibility of applying algebraic analysis 
to data sampled from manifolds other than $\,\SO(n)\,$ provided these have
the structure of a group.

Let $\,G\,$ be a compact connected Lie group and fix a 
real representation \mbox{$\,\pi: G\,\to\, \GL_n(\RR)$}. We can assume that $\,\pi\,$ is injective,
i.e.,~the representation is faithful.
We note that any compact Lie group admits a faithful representation~\cite[Section 8.3.4]{procesi}. 
The matrix group
 $\,\pi(G) \,\subset\, \RR^{n\times n}\,$ is a closed algebraic subvariety (see \cite[Section 8.7]{procesi}). 
If one starts with a complex representation instead, the situation can be studied in
the polynomial ring over $\CC$.

For our algebraic approach, the ambient setting is
the complex affine space \mbox{$X\,\coloneqq \,\CC^{n\times n}$}. 
The complexification $\,G_\CC\,$  of our group $\,G\,$ is a complex connected reductive 
algebraic~group \cite[Section 8.7.2]{procesi}. 
The extension $\,\pi: G_\CC \to X\,$ is a closed embedding.
Its image,  the matrix group $\pi(G_\CC)$, is the complex affine variety in $\,X$,
cut out by the same polynomials as the ones defining $\pi(G)$. 
We denote by $\,I_G\,$ the ideal generated by these polynomials in~$\CC[X]$. 
The quotient ring $\,\CC[G] \,\coloneqq\, \CC[X]/I_G\,$ is the
ring of polynomial functions on the group $\pi(G_\CC)$.

Let $\,\lie\,$ denote the complex Lie algebra of $G_\CC$. This is the complexification of the
real Lie algebra of the given Lie group $G$.  We write $\,U(\lie)\,$ for the universal 
enveloping algebra of~$\lie$.
For any affine variety, one can define the ring of algebraic differential operators
on that variety.
This is generally a complicated object, but things are quite nice in our case.

Let $\,D_G\,$ denote the ring of differential operators on $G_\CC$. We have natural inclusions
 $$ \lie \,\subset\, U(\lie) \,\subset\, D_G \qquad {\rm and} \qquad \CC[G] \,\subset\, D_G. $$
 These inclusions exhibit desirable properties.
   Namely, we have canonical
 isomorphisms
\begin{equation}\label{eq:tantriv}
D_G \,\, \cong \,\, \CC[G] \, \otimes \,U(\lie)\,\, \cong \,\, U(\lie)\, \otimes \,\CC[G].
\end{equation}
This holds because
 left (or right) invariant vector fields of $\,G_\CC\,$ trivialize the tangent bundle.
Recall that $\,G_\CC\,$ acts on $\,X\,=\,\CC^{n \times n}\,$ by left matrix multiplication via $\pi$. 
Through this action, elements in the Lie algebra $\,\mathfrak{g}\,$ induce vector fields on~$X$.
This gives an injective map 
\begin{equation}\label{eq:univ}
\phi\,: \, U(\lie)\, \hookrightarrow \, D_{n^2}.
\end{equation}

We now proceed to describing the algebra map $\,\phi\,$ explicitly. Fix an 
arbitrary element $\xi \in \lie$. Let $\,-M_\xi\,$ be the  $\,n \times n \,$ 
matrix corresponding to $\,\xi\,$ via the inclusion 
$\, \mathfrak{g} \hookrightarrow \mathfrak{gl}(n) $.
The following is the vector field encoding the 
Lie algebra action of $\,M_\xi\,$ on the space~$\,\mathfrak{gl}(n) \simeq \CC^{n \times n}$:
\begin{equation}
\label{eq:lieoperator} 
\phi(\xi) \,\, = \,\, \sum_{i,\, j \,= \, 1}^n (M_\xi)_{ij} \, \cdot \sum_{k=1}^n \, t_{jk} \partial_{ik}  \quad \in\,D_{n^2} .
\end{equation}

\begin{example}
Let $\,G\,=\, {\rm SO}(n)\,$ and  $\,\pi : G \, \to \, {\rm GL}_n(\RR)\,$
the standard representation on~$\RR^n$. The associated Lie algebra $\,\mathfrak{g}\,$ is
the space of skew-symmetric $\,n \times n\,$ matrices over~$\CC$.
A canonical basis of $\,\mathfrak{g}\,$
consists of the rank $\,2\,$ matrices $\,e_{ij}-e_{ji}\,$
for $ \,1 \leq i < j \leq n$.
The operator $\,P_{ij} \in D_{n^2}\,$ in Theorem~\ref{ann-theta}
is Fourier dual to  the vector field~\eqref{eq:lieoperator} if we take~$\,\xi \,=\,  e_{ji} - e_{ij}$.
\end{example}	

As seen in~\cite[Section 1.3]{htt},
the morphism of varieties $\,\pi : G_{\mathbb{C}} \rightarrow X\,$ induces a pushforward functor
of $D$-modules $\,\pi_+ : \text{Mod}(D_G)  \rightarrow \text{Mod}(D_{n^2})\,$
satisfying the following key property.

\begin{theorem}\label{thm:push}
If we regard $\,\CC[G]\,$ as a left $D_G\,$-module, then we have the isomorphism
\[\pi_+ ( \CC[G]) \,\, \cong \,\, D_{n^2}\, / \langle  \,I_G, \, \phi(\lie) \, \rangle.\]
In particular,  this quotient
 is a regular holonomic simple $D_{n^2}$-module.
\end{theorem}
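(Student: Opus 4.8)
The plan is to deduce this from Kashiwara's equivalence~\cite{htt} for the closed embedding $\pi$ together with the classification of equivariant $D$-modules on the free orbit $\pi(G_\CC)$. I would first dispose of the easy points. Since $G_\CC$ is a smooth connected affine variety, $\CC[G]=\mathcal{O}_{G_\CC}$ is a simple, regular holonomic $D_G$-module; and since $\pi$ is a closed embedding, $\pi_+$ is exact, so $\pi_+(\CC[G])$ is an honest $D_{n^2}$-module which, being the image of $\mathcal{O}_{G_\CC}$ under Kashiwara's equivalence, is again simple and regular holonomic, and is supported on $\pi(G_\CC)=V(I_G)$. This already yields the ``in particular'' clause, so it remains to show that the annihilator in $D_{n^2}$ of the cyclic generator $\delta:=\pi_+(1)$ of $\pi_+(\CC[G])$ is exactly $\langle I_G,\phi(\lie)\rangle$.

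The inclusion of $\langle I_G,\phi(\lie)\rangle$ into this annihilator is the heart of the matter, and it is here that compactness is used. Clearly $I_G\cdot\delta=0$, since $\delta$ is supported on $V(I_G)$. Because $\pi$ intertwines left translation on $G_\CC$ with left multiplication via $\pi$ on $X$, the module $\pi_+(\CC[G])$ carries a strongly $G_\CC$-equivariant structure whose Lie algebra action is through the operators $\phi(\lie)$. Tracing $\pi_+$ through on the generator, one gets $\phi(\xi)\cdot\delta=-\operatorname{div}(\phi(\xi))\cdot\delta$ for $\xi\in\lie$: the invariant vector field on $G_\CC$ attached to $\xi$ kills the constant $1$, and the only contribution to $\phi(\xi)\cdot\delta$ comes from the canonical-bundle factor of $\pi_+$ (with $\omega_{G_\CC}$ trivialized by a bi-invariant volume form and $\omega_X$ by $\bigwedge_{i,j}dt_{ij}$), which returns the divergence of the vector field $\phi(\xi)$ on $X=\CC^{n\times n}$. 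Now $\operatorname{div}(\phi(\xi))=n\operatorname{tr}(M_\xi)$, and this vanishes because $G$ compact and connected forces $\det\circ\pi\colon G\to\RR^{\times}$ to have compact connected, hence trivial, image, so $\pi(\lie)\subseteq\mathfrak{sl}_n(\CC)$. Hence $\phi(\xi)\cdot\delta=0$, giving a surjection $N:=D_{n^2}/\langle I_G,\phi(\lie)\rangle\twoheadrightarrow\pi_+(\CC[G])$; in particular $N\neq0$. I expect this step---confirming that it is the un-shifted operators $\phi(\lie)$, not $\phi(\lie)$ corrected by the character $\xi\mapsto\operatorname{tr}(M_\xi)$, that annihilate $\delta$---to be the main obstacle; for non-compact $G$ one instead finds $N=0$.

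For the reverse inclusion I would promote this surjection to an isomorphism by an equivariance argument. The left ideal $\langle I_G,\phi(\lie)\rangle$ is $G_\CC$-stable---$I_G$ is the radical ideal of the $G_\CC$-stable variety $\pi(G_\CC)$, and $\phi$ is $\operatorname{Ad}$-equivariant---and $\phi$ is the quantized comoment map of the action, so $N$ is a strongly $G_\CC$-equivariant coherent $D_{n^2}$-module supported on the single $G_\CC$-orbit $\pi(G_\CC)$, which is free because $\pi$ is faithful. Applying Kashiwara's equivalence once more, $N\cong\pi_+(N')$ with $N'$ a coherent strongly $G_\CC$-equivariant $D$-module on $G_\CC\cong G_\CC/\{1\}$; every such $N'$ is a finite direct sum $\mathcal{O}_{G_\CC}^{\oplus k}$, so $N\cong\pi_+(\CC[G])^{\oplus k}$. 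As $N$ is cyclic and $\pi_+(\CC[G])$ is simple, $k\le1$, while $k\ge1$ since $N\neq0$; thus $k=1$ and the surjection of the previous paragraph is an isomorphism. Combined with the first paragraph, this establishes both assertions of the theorem.
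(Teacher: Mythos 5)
Your overall architecture is different from the paper's. The paper proves the isomorphism directly, by writing $\CC[G]\cong \CC\otimes_{U(\lie)}D_G$ as right $D_G$-modules (using the trivialization \eqref{eq:tantriv}), identifying the transfer bimodule $D_{G\to X}$ with $D_X/(I_G\cdot D_X)$, and computing the tensor product $\pi_+(\CC[G])=\CC[G]\otimes_{D_G}D_{G\to X}\cong D_X/((I_G+\phi(\lie))\cdot D_X)$ in one stroke; Kashiwara's equivalence is invoked only for the ``in particular'' clause. You instead build a surjection $N=D_{n^2}/\langle I_G,\phi(\lie)\rangle\twoheadrightarrow\pi_+(\CC[G])$ and then try to show it is injective by classifying $N$ as $\pi_+(\CC[G])^{\oplus k}$. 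The first half is fine in spirit: your divergence computation $\operatorname{div}(\phi(\xi))=n\operatorname{tr}(M_\xi)=0$ is exactly the bookkeeping that the paper's side-switching step absorbs silently, and identifying it is a genuine insight. (Two small caveats: ``$I_G\cdot\delta=0$ because $\delta$ is supported on $V(I_G)$'' is not a valid inference --- $\partial\delta_0$ is supported at the origin but $x\cdot\partial\delta_0\neq 0$ --- though the conclusion is correct for the canonical generator; and your assertion that $N=0$ for non-compact $G$ contradicts Remark~\ref{rem:noncomp}, where the same statement is proved for any connected group with closed image, e.g.\ semisimple ones, for which $\operatorname{tr}(M_\xi)=0$ holds automatically.)

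The genuine gap is the final step: ``as $N$ is cyclic and $\pi_+(\CC[G])$ is simple, $k\le 1$.'' This is false over the Weyl algebra. A direct sum of $k$ copies of a simple holonomic module is cyclic for \emph{every} $k$: by Stafford's theorem every holonomic module is cyclic, and one can see it by hand already for $k=2$ --- the module $\CC[x]\oplus\CC[x]$ over $D_1$ is generated by the single element $(1,x)$, since $(x\partial-1)\bullet(1,x)=(-1,0)$. So cyclicity of $N$ places no bound on $k$, and your argument does not rule out $N\cong\pi_+(\CC[G])^{\oplus k}$ with $k\ge 2$. To close this you need a multiplicity-one statement from elsewhere. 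The paper's Remark~\ref{rem:another} supplies it from the uniqueness of the left-invariant measure on $G$: equivalently, $\operatorname{Hom}_{D_{n^2}}(N,\pi_+(\CC[G]))$ is the space of elements of $\pi_+(\CC[G])$ killed by $I_G$ and $\phi(\lie)$, which is one-dimensional, whereas $N\cong\pi_+(\CC[G])^{\oplus k}$ would force this space to have dimension $k$. Alternatively, abandon the indirect route and run the paper's tensor computation, which never needs to count summands.
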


\begin{proof}
By~\eqref{eq:tantriv}, we have the following isomorphism of right $D_G\,$-modules:
\begin{equation}\label{eq:struct}
\CC[G] \,\, \cong \,\, \CC \,\otimes_{U(\lie)}\, D_G.
\end{equation}
On the right, $\,\CC\,$ denotes the trivial representation of the universal enveloping algebra $U(\lie)$. 

Let $\,D_{G\to X} \, \coloneqq \, \CC[G]\,\otimes_{\CC[X]}\, D_{X}\,$ denote the transfer bimodule.
This is a left $D_G\,$-module and a right $D_X$-module. 
Since the action of $\,\lie\,$ extends to the whole space~$X$, 
we have $\,\CC[G]\, \cong \,\CC[X]/I_G\,$ as $\lie$-modules, 
and the left $U(\lie)$-structure of $\,D_{G\to X}\,$ is induced by the Leibniz rule 
via the map~\eqref{eq:univ} on the second factor. 
We obtain the isomorphism of bimodules
\begin{equation}\label{eq:trans}
D_{G\to X} \,\, \cong \,\,  D_X /( I_G \cdot D_X).
\end{equation}
 By (\ref{eq:struct}) and (\ref{eq:trans}), we have the following isomorphisms of right $D_{X}$-modules:
\begin{align*}\label{eq:isos}
\pi_+ (\CC[G]) &\, \coloneqq \,\,  \CC[G] \,\otimes_{D_G}\, D_{G\to X} \,\, \cong \,\,  (\CC\, \otimes_{U(\lie)}\, D_G) \, \, \otimes_{D_G} \,D_{G\to X} 
 \\&\,\,\cong \,\, \CC \, \otimes_{U(\lie)} \, D_{X}/(I_G \cdot  D_{X}) \,\, \cong \,\, D_{X}/((I_G+ \phi(\lie)) \cdot  D_{X}).
\end{align*}
The fist claim now follows by  switching to left $D_X$-modules. By
Kashiwara's Equivalence Theorem~\cite[Section 1.6]{htt}, the module
 $\,D_{X}/ \langle I_G, \phi(\lie) \rangle\,$ is regular holonomic and simple.
\end{proof}

\begin{remark}\label{rem:noncomp}
The assumption that $\,G\,$ is compact is not needed in Theorem~\ref{thm:push}.
The proof works for any representation
$\,\pi: H \to \GL_n(\CC)\,$ of a complex connected  algebraic group 
such that $\,\pi(H)\,$ is closed in $\CC^{n\times n}$. 
Such a representation exists for all  semi-simple groups~$H$.
Another natural setting is that of orbits of
a compact group $G$ acting linearly on a real vector space,
with left-invariant measures used in Corollary \ref{cor:anndist}.
In our view,
the theory of orbitopes~\cite{orbi} should be of interest for 
statistical inference with data sampled from orbits.
\end{remark}

\begin{remark} \label{rem:another}
Here is a more conceptual argument for Theorem \ref{thm:push}. 
The $D$-module \mbox{$\,M\,= \,D_{n^2}/ \langle I_G, \, \phi(\lie) \rangle\,$} 
is equivariant and supported on $\,\pi(G_{\CC})$ (see \cite[Section 11.5]{htt}). 
By Kashiwara's Equivalence Theorem, 
it is the pushforward of a coherent equivariant \mbox{$D$-module} on~$G_{\CC}$.
This is a direct sum of copies of the module~$\CC[G]$, by the Riemann--Hilbert Correspondence. 
Hence, $\,M\,$ is a direct sum of copies of $\,\pi_+ (\CC[G])$. 
The existence of a unique left-invariant 
measure on~$\,G\,$ implies that there is only one such summand in $M$.
\end{remark}

Let $\,\mu_\pi\,$ be the distribution on $\,\RR^{n\times n}\,$ given by integration against the Haar 
measure on~$G$. 

\begin{corollary}\label{cor:anndist}
The annihilator in $\,D_{n^2}\,$ of this distribution  equals
$$\ann_{D_{n^2}}(\mu_\pi) \,\,=\,\,  \langle \, I_G, \, \phi(\lie) \, \rangle.$$ 
\end{corollary}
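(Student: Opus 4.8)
The plan is to realize $\mu_\pi$ as a distribution solution to the $D_{n^2}$-module $M \coloneqq D_{n^2}/\langle I_G, \phi(\lie)\rangle$ furnished by Theorem~\ref{thm:push}, and then to leverage the fact that $M$ is simple. One inclusion, $\langle I_G,\phi(\lie)\rangle \subseteq \ann_{D_{n^2}}(\mu_\pi)$, is checked by hand, and the reverse inclusion is then forced by simplicity of $M$.

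First I would verify that $\langle I_G, \phi(\lie)\rangle$ annihilates $\mu_\pi$. Recall that $\mu_\pi$ is, by definition, the positive Radon measure $\varphi \mapsto \int_G (\varphi\circ\pi)\, d\mathrm{Haar}$ on $\RR^{n\times n}$; it is supported on the compact set $\pi(G)$, on which every element of $I_G$ vanishes. Being a measure, i.e.\ a distribution of order zero, $\mu_\pi$ is killed by multiplication by any $f \in I_G$, since $f\circ\pi \equiv 0$. For the Lie-algebra generators I would use the left-invariance of the Haar measure: it implies that $\mu_\pi$ is invariant under the left-translation action $A \mapsto \pi(h)\,A$ of $\pi(G)$ on $\RR^{n\times n}$, as one sees by pushing $\mu_\pi$ forward under $A\mapsto\pi(h)A$ and changing variables in the defining integral. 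The vector fields $\phi(\xi)$ in~\eqref{eq:lieoperator}, for $\xi\in\lie$, are exactly the infinitesimal generators of this action, so differentiating the invariance relation at the identity of $G$ yields $\phi(\xi)\bullet\mu_\pi = 0$ for all $\xi \in \lie$.

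Next I would deduce equality from simplicity. By Theorem~\ref{thm:push}, $M = D_{n^2}/\langle I_G,\phi(\lie)\rangle$ is a simple $D_{n^2}$-module. The inclusion just established means that the $D_{n^2}$-linear map from $D_{n^2}$ to the space of distributions on $\RR^{n\times n}$ given by $P\mapsto P\bullet\mu_\pi$ factors through $M$, producing a nonzero homomorphism $M \to D_{n^2}\bullet\mu_\pi$ that carries the cyclic generator of $M$ to $\mu_\pi$. Its kernel is a $D_{n^2}$-submodule of the simple module $M$, hence either $0$ or $M$; it is not $M$ because $\mu_\pi\neq 0$. Therefore this map is an isomorphism, and pulling the zero submodule back along $D_{n^2}\twoheadrightarrow M$ identifies $\ann_{D_{n^2}}(\mu_\pi)$ with $\langle I_G,\phi(\lie)\rangle$, as claimed.

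The hard part will be the last step of the second paragraph: making rigorous that the $D$-module action of the vector field $\phi(\xi)$ on the measure $\mu_\pi$ coincides with the infinitesimal version of the geometric left-translation invariance, i.e.\ controlling the formal-adjoint (divergence) terms that arise when a vector field acts on a distribution, so that one genuinely obtains $\phi(\xi)\bullet\mu_\pi = 0$ and not an equation with spurious extra terms. Here compactness of $G$ is what makes life easy: it renders $\mu_\pi$ a finite, compactly supported, bi-invariant measure, and the $\phi(\xi)$ are tangent to $\pi(G)$, so this reduces to a short local computation.
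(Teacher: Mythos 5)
Your proposal is correct and follows essentially the same route as the paper: verify the inclusion $\langle I_G,\phi(\lie)\rangle \subseteq \ann_{D_{n^2}}(\mu_\pi)$ using the support of $\mu_\pi$ and left-invariance of the Haar measure, then invoke the simplicity of $D_{n^2}/\langle I_G,\phi(\lie)\rangle$ from Theorem~\ref{thm:push} (equivalently, maximality of the left ideal) to force equality. Your extra care about the order-zero nature of $\mu_\pi$ and the divergence terms (which vanish here since $\det\circ\,\pi\equiv 1$ on the connected compact group, so the vector fields $\phi(\xi)$ are divergence-free) only makes explicit what the paper leaves implicit.
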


\begin{proof}
Since ${\rm supp}(\mu_\pi)\,=\,\pi(G)$, we have $\,I_G \,\subset\, \ann_{D_{n^2}}(\mu_\pi)$. 
Since $\,\mu_\pi\,$ is a left-invariant distribution, we have also 
$\,\phi(\lie) \,\subset\, \ann_{D_{n^2}}(\mu_{\pi})$. 
By Theorem \ref{thm:push}, the $D$-ideal $\,\langle \, I_G, \, \phi(\lie) \, \rangle\,$ 
is a maximal left ideal in $D_{n^2},$
since its quotient is simple. It is therefore equal to $\ann_{D_{n^2}}(\mu_{\pi})$.
\end{proof}

The following observation establishes the connection to statistics,
as in~\cite[Section 4]{koyama}.

\begin{remark} \label{rmk:fourier}
The Fourier--Laplace transform of $\,\mu_\pi\,$ 
has a complex analytic continuation to a holomorphic function
on $\,\CC^{n \times n}\,$ by the Paley--Wiener--Schwartz Theorem, namely
\begin{equation}
\label{eq:normalizing3}
c(\Theta) \,\,= \,\,\int_{G} \exp(\textrm{tr}(\Theta^\text{t}\pi(Y)))\mu(dY).
\end{equation}
This is the normalizing constant of the Fisher distribution on the group $\pi(G)
\,\subset\, {\rm GL}_n(\RR)$. Note that this can be defined for a complex representation 
$\,\pi(G)\,\subset\, {\rm GL}_n(\CC)\,$ as well.
\end{remark}

The Fourier transform, denoted by~$(\bullet)^{\mathcal{F}},$ 
switches the operators $\,t_{ij}\,$ and $\,\partial_{ij}\,$
in the Weyl algebra~$D_{n^2}$, with a minus sign involved.
We consider the image of the $D$-ideal in
Corollary~\ref{cor:anndist} under this automorphism of~$D_{n^2}$.
This image is a $D$-ideal $J_\pi$ that is defined over~$\RR$:
\begin{equation}
\label{eq:jaypi}
 J_\pi \,\, = \,\, \langle \, I_G, \,\phi(\lie) \, \rangle^{\mathcal{F}} .
\end{equation}
 The following result
generalizes Theorem~\ref{ann-theta} to compact Lie groups other than $\SO(n)$.

\begin{corollary}\label{cor:annfun}
The $D$-module $\,D_{n^2}/J_\pi\,$ is simple holonomic
and $\,\ann_{D_{n^2}} (c(\Theta)) \,=\, J_\pi$.
\end{corollary}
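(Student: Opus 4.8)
The plan is to transport Corollary~\ref{cor:anndist} through the Fourier transform, using that $(\bullet)^{\mathcal F}$ is a $\CC$-algebra automorphism of $D_{n^2}$. Such an automorphism induces a bijection between left ideals of $D_{n^2}$ and an autoequivalence of $\mathrm{Mod}(D_{n^2})$ carrying $D_{n^2}/\langle I_G,\phi(\lie)\rangle$ to $D_{n^2}/J_\pi$; it preserves the dimension of the characteristic variety and the lattice of submodules, hence preserves holonomicity and simplicity. By Theorem~\ref{thm:push} the module $D_{n^2}/\langle I_G,\phi(\lie)\rangle$ is holonomic and simple, so $D_{n^2}/J_\pi$ is holonomic and simple as well. (Regularity is generally destroyed by $(\bullet)^{\mathcal F}$, but it is not claimed here.)

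For the annihilator, I would first establish $J_\pi\subseteq\ann_{D_{n^2}}(c(\Theta))$. By Corollary~\ref{cor:anndist}, $\langle I_G,\phi(\lie)\rangle=\ann_{D_{n^2}}(\mu_\pi)$. The distribution $\mu_\pi$ has compact support, so by the Paley--Wiener--Schwartz theorem its Fourier--Laplace transform is the entire function $c(\Theta)$ of Remark~\ref{rmk:fourier}. The exchange of $t_{ij}$ with $\partial_{ij}$ (up to sign) defining $(\bullet)^{\mathcal F}$ is precisely the rule governing the action of differential operators under the Fourier transform: if $P\bullet\mu_\pi=0$, then $P^{\mathcal F}\bullet c(\Theta)=0$. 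Applying this to a generating set of $\ann_{D_{n^2}}(\mu_\pi)$ yields $J_\pi=\langle I_G,\phi(\lie)\rangle^{\mathcal F}\subseteq\ann_{D_{n^2}}(c(\Theta))$.

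The reverse inclusion is then immediate from maximality. Simplicity of $D_{n^2}/J_\pi$ means $J_\pi$ is a maximal left ideal of $D_{n^2}$. Since $c(\Theta)$ is not identically zero — for instance $c(0)=\int_G\mu(dY)=1$ — the cyclic module $D_{n^2}\bullet c(\Theta)\cong D_{n^2}/\ann_{D_{n^2}}(c(\Theta))$ is nonzero, so $\ann_{D_{n^2}}(c(\Theta))$ is a proper left ideal. A proper left ideal containing the maximal left ideal $J_\pi$ must equal it, giving $\ann_{D_{n^2}}(c(\Theta))=J_\pi$.

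The one point requiring care is matching the purely algebraic operation $(\bullet)^{\mathcal F}$ on $D_{n^2}$ with the analytic Fourier(--Laplace) transform of the compactly supported distribution $\mu_\pi$ — i.e.\ fixing the sign and normalization conventions so that $P\bullet\mu_\pi=0\Rightarrow P^{\mathcal F}\bullet c(\Theta)=0$ holds on the nose; the remainder of the argument is formal.
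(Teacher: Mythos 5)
Your argument is correct and follows the paper's own route: Theorem~\ref{thm:push} plus the fact that the Fourier transform is an automorphism of $D_{n^2}$ (hence an auto-equivalence preserving holonomicity and simplicity) gives that $D_{n^2}/J_\pi$ is simple holonomic, while Corollary~\ref{cor:anndist} and Remark~\ref{rmk:fourier} give $J_\pi\subseteq\ann_{D_{n^2}}(c(\Theta))$, and maximality of $J_\pi$ forces equality. You merely make explicit the maximality step and the convention-matching for $(\bullet)^{\mathcal F}$ that the paper leaves implicit.
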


\begin{proof}
By Corollary~\ref{cor:anndist}, Remark~\ref{rmk:fourier},
 and the defining property of the Fourier transform, 
 we see that $\,J_\pi\,$ annihilates 
 the integral in (\ref{eq:normalizing3}).
The proof  concludes  by recalling that the Fourier transform induces an 
 auto-equivalence on the category of (holonomic) $D_{n^2}$-modules.
 \end{proof}

We saw in  Section~\ref{sec5} that sampling from
$\,{\rm SO}(3)\,$ is ubiquitous in the applied sciences.
It would be worthwhile to explore such scenarios 
also for other matrix groups $\pi(G)$, and to apply
holonomic methods to
maximum likelihood estimation in their Fisher model.

\smallskip

One promising context for data applications is the unitary groups  in quantum physics.

\begin{example}\label{ex:su2}
The compact group $\,G=\operatorname{SU}(2)\,$ consists of 
complex $\,2 {\times} 2\,$ matrices of the~form
\begin{equation}
\label{eq:SU2}
\begin{pmatrix} \alpha & \beta \\ -\overline{\beta} & \overline{\alpha}  
\end{pmatrix}, \quad \mbox{ with } \,\,|\alpha|^2\,+\,|\beta|^2\,\, = \,\, 1.
\end{equation}
Note that $\,G\,$ is a double cover of $\,\SO(3)$. 
While the odd-dimensional (complex) representations of $\,G\,$ descend 
to real-valued representations of $\SO(3)$, 
this is not true for the even-dimensional (spin) representations. 
Consider the standard representation $\,G\,\subset\, \CC^{2\times 2}$.
 
The complexification of the matrix group in \eqref{eq:SU2} is simply the group
 $\SL_2(\CC) \,\subset\, \CC^{2\times 2}$.
The associated (maximal, holonomic) ideal $\,J_\pi\,$ is generated by the following four operators:
\begin{align*}
& d\,\, = \,\, \det(\partial)-1, & & h\,\, = \,\, t_{11}\partial_{11}+t_{12}\partial_{12} - t_{21}\partial_{21} - t_{22}\partial_{22},\\
& e \,\, = \,\,  t_{21}\partial_{11} + t_{22}\partial_{12}, & & f \,\, = \,\,  t_{11}\partial_{21} + t_{12}\partial_{22}.
\end{align*}
A computation shows that
$\,\rank J_\pi \,=\, 2\,$ and $\,\Sing (J_\pi)\,=\,\{\Theta \in \CC^{2\times 2}\, | \, \det(\Theta)\,=\,0\}$. 
The Lie algebra operators $\,e,f,h\,$ ensure that every holomorphic solution to 
$\,J_\pi\,$ is $\,\SL_2$-invariant.
By~\cite{luna},  
every solution has the form $ \Theta \,\mapsto\, \phi({\rm det}(\Theta))$,
for some analytic function $\,\phi\,$ in a domain of $\,\CC^*$. 
This  is annihilated by $\,d\,$ (hence, by $J_\pi$) if and only if $\,\phi(x)\,$ is annihilated by
\[x\partial^2 \,+\,2 \partial \,-\,1  \,\, \in \,\, D_1.\]
This has only one (up to scaling) entire solution $\phi$, with series expansion at $\,x\,=\,0\,$ given~by
\[ \phi(x) \,\, = \,\,  \sum_{n=0}^{\infty}\,\, \frac{1}{n! \cdot (n+1)!} \, \, x^n.\]
By comparing constant terms, we conclude that $\, c(\Theta) \, = \,\phi\bigl( {\rm det}(\Theta) \bigr)$.
It is straightforward to generalize the above considerations to the 
fundamental representation of the special unitary group $\text{SU}(m)$ 
for any $m \, \geq \, 1 $. In that setting, we find that~$\rank (J_{\pi})\,= \,m \,$.
\end{example}

In conclusion, the $D$-ideal $\,J_\pi\,$ is an interesting object that
deserves further study, not just for the rotation group ${\rm SO}(n)$,
but for arbitrary Lie groups $G$.
Sections \ref{sec:holonomic} and \ref{sec6} offer numerous suggestions for future research.
For instance, what is the holonomic rank of $J_\pi$?
Furthermore, it would be desirable to experiment with data sampled from groups $\,G\,$
other than ${\rm SO}(3)$, so as to broaden the
applicability of algebraic analysis in statistical inference.

\bigskip
\bigskip
\bigskip

\noindent {\bf Acknowledgments.}
We thank Mathias Drton for helpful discussions on statistics,  
Ralf Hielscher for discussions on materials science, and
Max Pfeffer for improvements in our numerical methods.
We are grateful to
Nobuki Takayama and his collaborators for many insightful discussions, 
and to Charles Wang for getting us started on the material for~$\text{SO}(n)$.

\bigskip

\bigskip \bigskip  \bigskip

\noindent
\footnotesize 
{\bf Authors' addresses:}

\smallskip

\noindent Michael F. Adamer, MPI-MiS Leipzig
\hfill {\tt michael.adamer@mis.mpg.de}

\noindent Andr\'{a}s C. L{\H o}rincz, MPI-MiS Leipzig
\hfill {\tt andras.lorincz@mis.mpg.de}

\noindent Anna-Laura Sattelberger,  MPI-MiS Leipzig
\hfill {\tt anna-laura.sattelberger@mis.mpg.de}

\noindent Bernd Sturmfels,
MPI-MiS Leipzig and UC Berkeley 
\hfill {\tt bernd@mis.mpg.de}

\end{document}